\newtheorem{theorem}{Theorem}[section]
\newtheorem{definition}{Definition}[section]
 \newtheorem{remark}{Remark}[section]
  \newtheorem{lem}{Lemma}[section]
  \newtheorem{cor}{Corollary}[section]
\theoremstyle{definition}
\newtheorem{example}{Example}[section]
\newcommand{\mybox}[1]{%
  \setbox0=\hbox{#1}%
  \setlength{\@tempdima}{\dimexpr\wd0+13pt}%
  \begin{tcolorbox}[colframe=mycolor,boxrule=0.5pt,arc=4pt,
      left=6pt,right=6pt,top=6pt,bottom=6pt,boxsep=0pt,width=\@tempdima]
    #1
  \end{tcolorbox}
}
\newcommand{\dlim}{\displaystyle \lim\limits}
\newcommand{\dsum}{\displaystyle \sum\limits}
\renewcommand{\vec}[1]{\symbf{#1}}
\renewcommand\vec{\mathbf}
\newcommand{\transpose}[0]{{\mkern-2mu\top}}
\newcommand*\circled[1]{\textcircled{\small{#1}}}
\def\S{\mathcal S}
\def\C{\mathcal C}
\def\Re{\mathcal R}
\def\K{\mathcal K}
\def\Td1{T^{D,1}_{\{x_n\}}}
\def\Ts1{T^{S,1}_{\{x_n\}}}
\def\K{\mathcal{K}}
\def\Tid1{T^{D,1}_{\{x_n+\zeta_i\}}}
\def\Tis1{T^{S,1}_{\{x_n+\zeta_i\}}}
\def\Tjs1{T^{S,1}_{\{x_n+\zeta_j\}}}
\def\Tjd1{T^{D,1}_{\{x_n+\zeta_j\}}}
\newcommand*\colvec[1]{
        \global\colveccount#1
        \begin{pmatrix}
        \colvecnext
}
\def\colvecnext#1{
        #1
        \global\advance\colveccount-1
        \ifnum\colveccount>0
                \\
                \expandafter\colvecnext
        \else
                \end{pmatrix}
        \fi
}
\tikzset{every node/.style={auto}}
 \tikzset{every state/.style={rectangle, minimum size=0pt, draw=none, font=\normalsize}}
 \tikzset{bend angle=7}
\title{Slack Reactants: A State-Space Truncation Framework to Estimate Quantitative Behavior of the Chemical Master Equation}
\author{Jinsu Kim\thanks{Department of Mathematics, University of California, Irvine, jinsu.kim@uci.edu.} \quad Jason Dark\thanks{Department of Mathematics, University of California, Irvine} \quad German Eciso\thanks{Department of Mathematics, University of California, Irvine, Department of Developmental and Cell Biology, University of California, Irvine} \ \ and \ \ Suzanne Sindi\thanks{Department of Applied Mathematics, University of California, Merced}}
\begin{document}

\maketitle

%\author{Jinsu Kim \thanks{123213}}
%\email{jinsu.kim@uci.edu}
% \affiliation{Department of Mathematics, University of California, Irvine}%Lines break automatically or can be forced with \\
%\author{Jason Dark}%
% %\email{Dark?}
% \affiliation{Department of Mathematics, University of California, Irvine}
%\affiliation{Department of Applied Mathematics, University of California, Merced}
%
%\author{German Enciso}
% \email{enciso@uci.edu}
% \affiliation{Department of Mathematics, University of California, Irvine}
% \affiliation{Department of Developmental and Cell Biology, University of California, Irvine}
% 
%\author{\ Suzanne Sindi}
% \email{ssindi@ucmerced.edu }
% \affiliation{Department of Applied Mathematics, University of California, Merced}

%\date{\today} %It is always \today, today,
             %  but any date may be explicitly specified

\begin{abstract}

 State space truncation methods are widely used to approximate solutions of the chemical master equation. While most methods of this kind focus on truncating the state space directly, in this work we propose modifying the underlying chemical reaction network by introducing \emph{slack reactants} that indirectly truncate the state space. More specifically, slack reactants introduce an expanded chemical reaction network and impose a truncation scheme based on user defined properties, such as mass-conservation. This network structure allows us to prove inheritance of special properties of the original model, such as irreducibility and complex balancing.
 
We use the network structure imposed by slack reactants to prove the convergence of the stationary distribution and first arrival times. We then provide examples comparing our method with the stationary finite state projection and finite buffer methods. Our slack reactant system appears to be more robust than some competing methods with respect to calculating first arrival times. 

\end{abstract}

\maketitle

% \begin{quotation}
% The ``lead paragraph'' is encapsulated with the \LaTeX\ 
% \verb+quotation+ environment and is formatted as a single paragraph before the first section heading. 
% (The \verb+quotation+ environment reverts to its usual meaning after the first sectioning command.) 
% Note that numbered references are allowed in the lead paragraph.
% %
% The lead paragraph will only be found in an article being prepared for the journal \textit{Chaos}.
% \end{quotation}

\section{Introduction}
Chemical reaction networks (CRN) are a fundamental tool in the modeling of biological systems, providing a concise representation of known chemical or biological dynamics. %that can easily be converted to and from a mathematical model.
A CRN is defined by a family of chemical reactions of the form
\begin{equation*}
    \sum_{i=1}^n \alpha_{i} X_i \rightarrow \sum_{i=1}^n \beta_{i} X_i,
\end{equation*}
where $\alpha_{i}$ and  $\beta_{i}$ are the number of species $X_i$ consumed and produced in this reaction, respectively. The classical approach to modeling this CRN is to consider the concentrations $c(t)=(c_1(t),c_2(t),\dotsc,c_n(t))^\transpose$, where $c_i(t)$ is the concentration of species $X_i$ at time $t$, and to use a system of nonlinear differential equations to describe the evolution of the concentrations.  

%gae deleted this text
%Having $n$ species, we formulate an $n$-dimensional, nonlinear system of differential equations:
%\begin{equation}\label{eq:deter}
%    \frac{dc_k}{dt} = \sum_{j} \lambda_{\nu_j \to \nu'_j} (c(t)) (\beta_{jk}-\alpha_{jk}),
%\end{equation}
%where $\lambda_{\nu\to \nu'}$ is a \emph{reaction intensity} of a reaction $\nu\to \nu'$ that assigns the weight of each reaction at the current state $c$. 

% Here, we have generalized the standard approach with the introduction of affinities $\{\lambda_i(x)\}$. These are often to satisfy \emph{mass-action} principles, where $f_i(x)=x$. However, with the explicit consideration of these functions, we may easily apply Michaelis-Menten enzyme kinetics $f_i(x) = \frac{x}{k_i+x}$ or other kinetics \cite{anderson2010product}. This flexibility is essential to our approach to approximating the chemical master equation.

%%%%% Chemical Master Equation
Suppose we are interested in studying the typical enzyme-substrate system \cite{menten1913kinetik}, given by the CRN
\begin{equation}\label{eq:enzyme}
    S+E \rightleftharpoons D \rightarrow  P+E.
\end{equation}

Given an initial condition where the molecular numbers of both $D$ and $P$ are $0$, a natural question to ask is how long the system takes to produce the \emph{first} copy of $P$. Similarly, there exists a time in the future where $S$ and $D$ are fully depleted, resulting in a chemically-inactive system, and one can ask when this occurs. These are both quantities that the classical deterministic modeling %\eqref{eq:deter}  
leaves unanswered -- by considering only continuous concentrations, there is not a well-defined way to address modeling questions at the level of single molecules as the model assumes that all the reactions simultaneously occur within infinitesimally small time intervals.

Instead, by explicitly considering each individual copy of a molecule, we may formulate a continuous-time Markov chain. This stochastic modeling is especially important when the system consists of low copies of species, in which case the influence of intrinsic noise is magnified \cite{paulsson2004summing, enciso2019embracing,anderson2019discrepancies}.  Rather than deterministic concentrations $c(t)$, we consider the continuous-time Markov chain $X(t)=(X_1(t),X_2(t),\dotsc,X_n(t))^\transpose$ describing the molecular number of each species $X_i$ at time $t$.   

%reaction system the continuous-time Markov process associated with a CRN.

% \begin{equation}
%     \left(\vec{1}^\transpose\vec{p}\right)' = \vec{1}^\transpose \vec{p}' = \vec{1}^\transpose A \vec{p} = \vec{0}^\transpose \vec{p} = 0.
% \end{equation}
The questions regarding the enzyme-substrate system \eqref{eq:enzyme}, such as the time of the first production of $P$, simply correspond to the \emph{first passage times} of various combinations of states. For a continuous-time Markov process $X$, the first passage time to visit a set $K$ of system states is formally defined as $\tau=\inf \{ t\ge 0 : X(t)\in K\}$.
%\textcolor{blue}{Furthermore, the concentration-based models can be derived from this Markov chain approach via a classical scaling argument \cite{anderson2011continuous}, raising the question of why all CRN modeling doesn't exclusively use this approach. We answer this question with another example.}
%
%%%\section{Curse of Dimensionality}
%One can estimate the mean first passage time $E(\tau_K)$ by sampling trajectories of the Markov chain with stochastic simulation algorithms such as Gillespie's algorithm \cite{gillespie2007stochastic}. 
% For instance, consider the reversible elementary reaction ${X \xrightleftharpoons{} Y}$: the discrete Markov chain formulation casts the solution as a probability vector with entries corresponding to ${\{(0,n),(1,n-1),\dotsc,(n,0)\}}$ (the possible configurations of $X$ and $Y$ given an initial condition ${X+Y=n}$). Now let us add a species $Z$ and reactions so that the CRN is ${X \xrightleftharpoons{} Y \xrightleftharpoons{} Z}$. Then the associated Markov process corresponding to each possible configuration of the reaction system has the states $\{(n_1,n_2,n_3) : n_1+n_2+n_3=n\}$. There are ${\binom{n+2}{2}=O(n^2)}$ such states -- a multiplicative growth in the dimensionality. If we allow an in-flow $\emptyset \rightleftharpoons A$, then the number of possible states is infinity.
%
One can directly compute $E(\tau)$ by using the transition rate matrix $A$. With few exceptions (such as CRNs with only zero- or first-order reactions \cite{lopez2014exact}),  most chemical reaction networks of any notable complexity will have an intractably large or infinite state-space, i.e. they exhibit the curse of dimensionality. This direct approach can, therefore, suffer from the high dimensionality of the transition rate matrix. 

An alternative approach is to estimate the mean first passage time by generating sample trajectories with stochastic simulation algorithms such as the Gillespie algorithm \cite{gillespie2007stochastic}. This overcomes the curse of dimensionality since a single trajectory needs only to keep track of its current population numbers. %There exists excellent software \autocite{drawert2016stochastic} to do this and more. 
Nevertheless, there still remain circumstances under which it is more efficient to numerically evaluate the exact solution of the chemical master equation -- in particular, when the Markov process rarely visits $K$ so that significantly long simulations may be required to sample enough trajectories to estimate the mean first passage time \cite{macnamara2008multiscale}. %the system is sensitive to a rare switching event, randomized Monte Carlo simulations may fail to adequately resolve the pmf without many samples \autocite{macnamara2008multiscale}. When each sample requires a lengthy run-time (as they would for rare events), SSA is no longer a practical approach.

Fortunately, a broad class of state space reduction methods have recently been developed that allow for direct treatment of the transition rate matrix.  \textcolor{black}{These methods are based on the truncation-and-augmentation  of the state space \cite{seneta1967finite, tweedie1971truncation, munsky2006finite, gupta2017finite}, the finite buffer method \cite{cao2008optimal,cao2016accurate} and linear programming \cite{kuntz2018approximations,kuntz2019bounding}. A recent review summarizes truncation-based methods \cite{kuntz2019stationary}. The \emph{stationary finite state projection (sFSP)}\cite{gupta2017finite}, and the \emph{finite buffer method} \cite{cao2008optimal, cao2016accurate} are examples of such truncation-based methods, which we will describe in detail below. They all satisfy provable error estimates on the probability distributions when compared with the original distribution.}  On the other hand, each of these methods has potential limitations for estimating mean first passage times and other quantitative features.  For instance, using sFSP depends on the choice of a designated state, which can significantly alter the estimate for first passage times.  

%they are known as the \emph{finite state projection (FSP)} \cite{munsky2006finite}, the \emph{stationary finite state projection (sFSP)}\cite{gupta2017finite}, and the \emph{finite buffer method} \cite{cao2008optimal, cao2016accurate}. They will be described in detail below, and they all satisfy provable error estimates on the probability distributions when compared with the original distribution.  On the other hand, each of these methods has potential limitations for estimating mean first passage times and other quantitative features.  For instance, using sFSP depends on the choice of a designated state, which can significantly alter the estimate for first passage times.  

In this paper we provide a new algorithm of state space reduction, the \emph{slack reactant method}, for stochastically modeled CRNs. In this algorithm we generate a new CRN from an existing CRN by adding one or multiple new species, so that the associated stochastic system satisfies mass conservation relations and is confined to finitely many states. In order to ensure equivalent dynamics to the original system, we define a mild form of non-mass action kinetics for the new system.  Since the state space reduction is implemented using a fully determined CRN, we can study the CRN using well-known tools of CRN theory such as deficiency zero \cite{}, Lyapunov functions \cite{}, etc, as long as they extend to this form of kinetics.  

%When a given network system admits infinitely many states, we can approximate the system behavior with a slack network to overcome the curse of dimensionality of the original system. In Section \ref{sec:theorems}, we provide theorems to show that a slack network can be used to estimate the dynamical behavior of the original system such as the time evolution of the probability density function, stationary distributions and first passage times.  

%System approximation with slack reactants is mainly based on the state space truncation on finitely many states. Similar approaches such as \emph{the finite state projection} and \emph{the finite butter method} have been developed and applied to many biochemical systems relying on state space truncation. 

%Unlike previous methods,  slack systems come with explicit network structures on the state space truncation.

In Section~\ref{subsec:algo} we provide an algorithm to produce a slack variable network given a desired set of mass conservation relations.  In addition to its theoretical uses, this algorithm allows to implement existing software packages such as CERENA \cite{kazeroonian2016cerena}, StochDynTools \cite{HespanhaDec06} and FEEDME \cite{FEEDME} for chemical reaction networks to generate quantities such as the moment dynamics of the associated stochastic processes, using the network structures as inputs. 

%[paragraph on theoretical results on convergence of slack method estimates to the actual values of the network, both for probability distributions as well as first passage times]

We employ classical truncation Markov chain approaches to prove convergence theorems for slack networks. For fixed time $t$, if a probability density of each slack system under conservation quantity $N$ converges to its stationary distribution uniformly in $N$, then the stationary distribution of the slack system converges to the original stationary distribution as $N$ tends to infinity. We further prove that under a uniform tail condition of first passage times, the mean first passage time of the original system can be approximated with slack systems confined on a sufficiently large truncated state space. Finally we show that the existence of Lyapunov function for an original system guarantees that all the sufficient conditions for the first passage time convergence are satisfied. \textcolor{black}{We also show that this truncation method is natural in the sense that a slack system admits the same stationary distribution up to a constant multiplication as the stationary distribution of the original system if the original system is complex balanced.}

This paper is outlined as follows. In Section~\ref{sec:slack reactant}, we introduce the slack reactant method and include several illustrative examples. In Section~\ref{sec:compare to}, we demonstrate that the slack method compares favorably with other state space truncation methods (sFSP, finite buffer method) when calculating mean first passage times.
%we calculate the mean first passage times of some simple stochastic CRNs to compare the slack reactant method to other methods of state space reduction. 
We prove convergence in the mean first passage time, and other properties, in Section~\ref{sec:theorems} %along with some useful properties of the slack reactant method. 
In Section~\ref{sec:practical examples}, we use slack reactants to estimate the mean first passage times for practical CRN models such as a Lotka-Volterra population model and a system of protein synthesis with a slow toggle switch.

\section{Stochastic chemical reaction networks}\label{subsec:CRN}
\textcolor{black}{
A chemical reaction network (CRN) is a graph that describes the evolution of a biochemical system governed by a number of \emph{species} $(\S)$ and \emph{reactions} $(\Re)$. Each node in the graph represents a possible state of the system and nodes are connected by directed edges when a single reaction transforms one state into another.
}
%A reaction network is a graphical configuration that describes a biochemical system. 
Each reaction consists of \emph{complexes} and a \emph{reaction rate constant}. For example, the reaction representing the transformation of  complex $\nu$ to complex $\nu'$ at rate $\kappa$ is written as follows:
\begin{equation}
    \nu \xrightarrow{\kappa} \nu'.
\end{equation}
%\begin{equation}.
%    \nu=\sum_{i=1}^n \alpha_{i} X_i \xrightarrow{\kappa} \sum_{i=1}^n \beta_{i} X_i=\nu',
%\end{equation} 
 A complex, such as $\nu$, is defined as a number of each species $S_i \in \S$. That is, $\nu = (\nu_1,\nu_2,\dots,\nu_d)$ representing a complex $\sum_{i=1}^d\nu_i S_i$, where $\nu_i \geq 0$ are the \emph{stochiometric coefficients} indicating how many copies of each species $S_i \in \mathcal S$ belong in complex $\nu$. % We can also the complex $\sum_{i=1}^n \alpha_{i} X_i$ by a vector $\vec y=(\alpha_{1},\alpha_{2},\cdots,\alpha_{d})^\transpose$. 
 The full CRN is thus defined by $(\S,\C,\Re,\Lambda)$ where $\C$ and $\Lambda$ represent the set of complexes and reaction propensities respectively.

When intrinsic noise plays a significant role for system dynamics, we use a continuous-time Markov chain $\vec X(t)=(X_1(t),X(2),\dots,X_d(t))$ to model the copy numbers of species $S_i$ of a reaction network. The stochastic system treats individual reactions as discrete transitions between integer-valued states of the system. The probability density for $X(t)$ is denoted as 
\begin{equation*}
    p_{\vec x_0}(\vec x,t) = P(X(t)=\vec x \ | \ X(0)=\vec x_0),
\end{equation*}
where $X(0)$ is the initial probability density. We occasionally denote by $p(\vec x)$ the probability omitting the initial state $\vec x_0$ when the contexts allows.
For each state $\vec x$, the probability density $p(\vec x)$ obeys the chemical master equation (CME), which gives the time-evolution of $\vec{p}(t)$ with a linear system of ordinary differential equations \cite{gillespie1992rigorous}:
\begin{equation}\label{eq:master}
    \frac{d}{dt}\vec{p}^{\transpose}(t)=\vec{p}^{\transpose}A.
\end{equation}
Here, the entry $A_{ij}$ ($i\neq j$) is the transition rate at which the $i$-th state transitions to $j$-th state. Letting $\vec x$ and $\vec x'$ be the $i$-th and $j$-th states, the transition rate from $\vec x$ to $\vec x'$ is 
\begin{align*}
    A_{ij}=\sum_{\substack{\nu \to \nu' \\ \vec x+\vec \nu' -\vec \nu  = \vec x'}} \lambda_{\nu \to \nu'}(\vec x), 
\end{align*}
where $\lambda_{\nu\to \nu'}$ is the \emph{reaction intensity} for a reaction $\nu \to \nu'$.
% By state $i$ or $j$, we have implicitly defined an enumeration over all possible $(n_1,n_2,\dotsc)$ and mean the integer vector with enumerative offset $i$ or $j$. Suppose state $j$ corresponds to $(n_1,n_2,\dotsc)$, state $i$ $(n_1',n_2',\dotsc)$, and the reaction linking them has stochastic rate constant $\hat{\lambda}_k$\footnote{The stochastic rate constant will multiplicatively differ from the classical kinetic rate constant by some power of $n_A V$, where $n_A$ is Avogadro's number and $V$ is the volume of the reactor. The practical reason for this is due to units: in the stochastic formulation, numbers are unitless, while in the classical formulation, concentrations have units of mass per volume \autocite{lecca2013stochastic}.}. Then,
% \begin{equation}
%     A_{ij} = \hat{\lambda}_k \prod_{i'=1}^n \prod_{j'=0}^{\alpha_{i',k}-1} f_{i'}(n_{i'}-j'),
% \end{equation}
% which has obvious similarities to the rates from the concentration formulation, but incorporates the discreteness of this modeling approach. 
The diagonal elements of $A$ are defined as
\begin{equation*}
    A_{jj} = -\sum_{i \neq j} A_{ij}.
\end{equation*}
% which guarantees the conservation of probability mass, e.g.

Normally, reaction intensities hold a standard property,
\begin{align}\label{eq:standard intensity}
    \lambda_{\nu\to \nu'}(\vec x) > 0 \quad \text{if and only if} \quad  x_i \ge \nu_i \quad \text{for each $i$}. 
\end{align}
A typical choice for $\lambda_{\to '}$ is \emph{mass-action}, which defines for $ \vec x=(x_1,x_2,\dots,x_d)$ and $\vec \nu=(\alpha_1,\alpha_2,\dots,\alpha_d)$,
\begin{align*}
    \lambda_{\nu \to \nu'}=\kappa_{\nu\to \nu'}\prod_{k=1}^d x_i^{(\alpha_i)}
\end{align*}
where $\kappa_{\nu\to \nu'}$ is the rate constant. Here we used the notation $m^{(n)}=m(m-1)(m-2)\cdots (m-n+1)\mathbbm{1}_{\{n \ge m\}},$ for positive integers $m$ and $n$.

%We denote by $(\S,\C,\Re,\K)$ the stochastic system associated with a given CRN, where $\S, \C$ and $\Re$ are the collection of species, the collection of complexes and the collection of the reactions in the CRN, respectively. $\K=\{\lambda_{\nu\to \nu'} : \nu\to \nu'\in \Re\}$ is the kinetics of the associated stochastic system. We refer $(\S,\C,\Re,\K)$ to a reaction system.

%We refer to a stochastic
%reducing its dimensionality, 

%providing proveable error estimates on the approximated the probability distribution. We now summarize this type of approximation and use it to motivate our contribution.

\section{Construct of Slack Networks}\label{sec:slack reactant}
In this Section, we introduce the method of slack reactants, which adds new species to an existing CRN so that the state space of the associated stochastic process is truncated to a finite subset of states. This model reduction accurately approximates the original system as the size of truncation is large. We begin with a simple example to demonstrate the main idea of the method.
\subsection{Slack reactants for a simple birth-death model}
Consider a mass-action 1-dimensional birth-death model,
\begin{equation}\label{eq:bd process}
    \emptyset \xrightleftharpoons[\kappa_2]{\kappa_1} X.
\end{equation}

\noindent For the associated stochastic process $X$, the mass-action assumption defines reactions intensities as $\lambda_{\emptyset \to X}(x)=\kappa_1$ and $\lambda_{X\to \emptyset}(x)=\kappa_2 x$ for each reaction in \eqref{eq:bd process}. 
Note that the count of species $X$ could be any positive integer value as the birth reaction $\emptyset \to X$ can occur unlimited times.
%in spite of very rare probability. 
Therefore the state space for this system consists of infinitely many states. %This conservation bound is related to a conservation law in the same way that linear inequalities may be converted to linear equalities by the introduction of non-negative slack variables. 
Consider instead the CRN
\begin{equation}\label{eq:slack bd process}
    Y \xrightleftharpoons[\kappa_2]{\kappa_1} X
\end{equation}
where we have introduced the \emph{slack reactant} \(Y\). This new network admits \({X+Y}\) as a \emph{conservative law} since for each reaction either one species is degraded by one while the other is produced by one.% Of course, where \(Y\) to have standard mass-action kinetics, this new CRN would have little in common with the original. 

Since the purpose of this new network is to approximate the qualitative behavior of the original system \eqref{eq:bd process}, we minimize the contribution of the slack reactant $Y$ for modeling the associated stochastic system. Hence, we assign \(Y\) a special reaction intensity -- instead of $\lambda_{Y\to X}(x,y)=\kappa_1 y$ using mass-action, we choose
\begin{align}\label{eq:intesity of y}
\lambda_{Y\to X}(x,y)=\kappa_1 \mathbbm{1}_{\{y\ge 0\}},
\end{align}
 and we use the same intensity $\lambda_{X\to Y}(x,y)=\kappa_2 x$ for the reaction $X\to Y$. By forcing \(Y\) to have ``zero-order kinetics,'' we ensure that the computed rates remain the same throughout the chemical network except for on the imposed boundary. This choice of reaction intensities not only preserves the conservative law $X(t)+Y(t)=X(0)+Y(0)$ but also prevent the slack reactant $Y$ from having negative counts with the characteristic term $\mathbbm{1}_{\{y\ge 0\}}$.

\begin{figure*}[!htb]
\centering{
\includegraphics{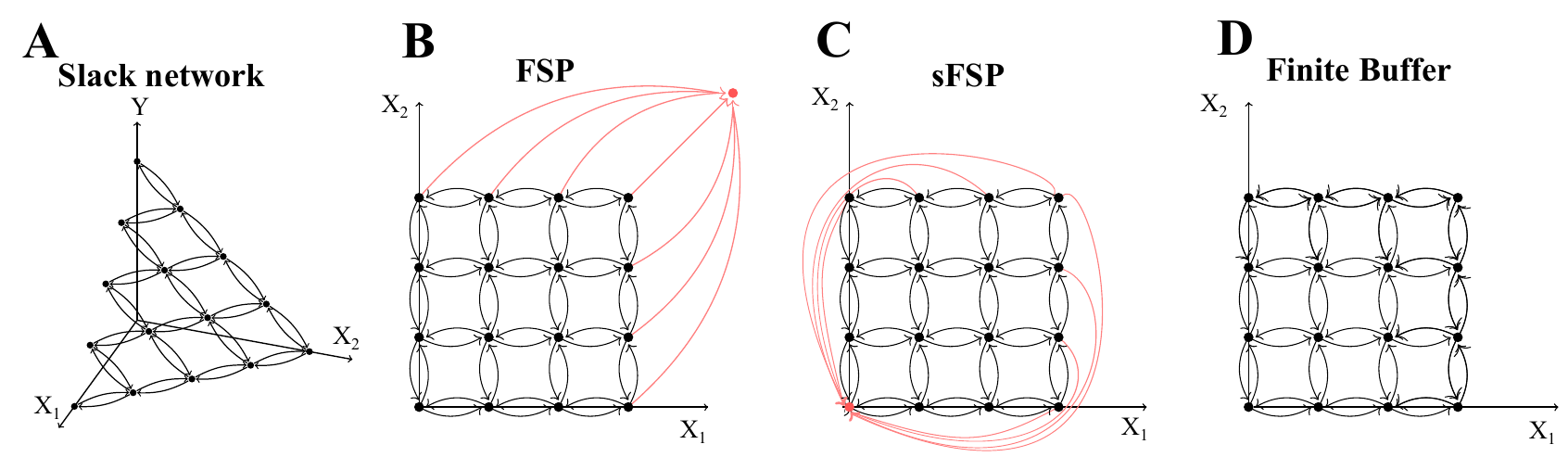}}
\caption{Schematic images of various state-space truncation methods for a CRN with two species: (a) Slack network (this paper), (c) Finite State Projection (FSP) \cite{munsky2006finite}, (c) Stationary Finite State Projection (sFSP) \cite{gupta2017finite}, (d) Finite Buffer Method \cite{cao2008optimal, cao2016accurate}.
}\label{fig:schematic}
\end{figure*}

\subsection{Algorithm for constructing a network with slack reactants}\label{subsec:algo}
In general, by using slack reactants, \emph{any} conservative laws can be deduced in a similar fashion to encode the desired truncation directly in the CRN. We have found this perspective to be advantageous with respect to studying complex CRNs. Rather than thinking about the software implementation of the truncation, it is often easier to design the truncation in terms of slack reactants, then implement the already-truncated system exactly.

We now provide an algorithm to automatically determine the slack reactants required for a specified truncation. It is often the case that a ``natural'' formulation arises (typically by replacing zero-order complexes with slack reactants) but when that is not the case, one can still find a useful approximation by following our algorithm.

Consider any CRN $(\S,\C,\Re,\Lambda)$. Suppose we wish to apply a number of \emph{conservation bounds} to reduce the state-space of the associated chemical master equation, e.g. many equations of the form
\begin{equation}
    \label{eq:ineq}
    w_{i,1} X_1 + w_{i,2} X_2 + \dotsc + w_{i,d} X_d \leq N_i,
\end{equation}
for $i=1,2,\dots,m$. Collecting the \emph{conservation vectors} $\{\vec w_i\}_{i=1}^m$ into the rows of the $m \times d$ matrix $W$ and each integer $\{N_i\}$ into the $m \times 1$ vector $\vec{N}$, we may concisely write
\begin{equation}\label{eq:L}
    W \vec{X} \preceq \vec{N},
\end{equation}
where $\preceq$ denotes an element-wise less-than-or-equal operator and $\vec{X}$ is the column vector of reactants ${\{X_i\}}$.

We augment the $d\times 1$ vector $\vec{X}$ with a $m \times 1$ vector $\vec{Y}$ of slack reactants to convert the inequality to an equality:
\begin{equation}\label{eq:conv}
    \begin{pmatrix} W & I \end{pmatrix} \begin{pmatrix} \vec{X} \\ \vec{Y} \end{pmatrix} = \vec{N},
\end{equation}
where $I$ is the $m\times m$ identity matrix. This matrix multiplication implies $m$ conservative laws among $X_i$'s and slack reactants $Y_i$'s. In other words, we inject the variables $\vec{Y}=(Y_1,Y_2,\dots,Y_m)$ into the existing reactions in such a way as to enforce these augmented conservation laws.

To do so, we need to properly define several CRN quantities. %A CRN is comprised of reactants, complexes (which are multisets of reactants), and reactions (which are relations between complexes). Complexes and reactions generate a directed graph, with edges formed between the two complexes in a reaction and weighted by the reaction rate constant.
Let $S$ be the $|\C| \times |\Re|$ \emph{connectivity matrix}: if the $r$-th reaction establishes an edge from the $i$-th complex to the $j$-th, then $-S_{ir}=S_{jr}=1$ and the other entries in column $r$ are 0. Let $C$ be the $|\S|\times |\C|$ \emph{complex matrix} $C$. Each column of $C$ corresponds to a unique complex and each row corresponds to a reactant. The entry $C_{ij}$ is the multiplier of $X_i$ in complex $j$ (which may be implicitly 0). 
The \emph{stoichiometry matrix} is simply ${\Gamma=CS}$, whose $i$-th column indicates the reaction vector of the $i$-th reaction. Note that the CRN admits a conservation law $r_1X_1(t)+r_2X_2(t)+\cdots + r_dX_d(t)=r_1X_1(0)+r_2X_2(0)+\cdots + r_d X_d(0)$ if and only if $\vec r^\transpose \Gamma=\vec 0$, where $r=(r_1,r_2,\dots,r_d)^\transpose$.

%Note that $S$ is a $|\C|\times |\Re|$ matrix, and $C$ is a $|\S|\times |\C|$ matrix. 
Now consider a modified network where we inject $Y_1,Y_2,\dots, Y_K$ into the complexes. Let $D_{ij}$ denote the (unknown) multiplier of $Y_i$ in the $j$-th complex.  We require the following to be true:
\begin{equation}\label{eq:new CRN}
    \begin{pmatrix} W & I \end{pmatrix} \begin{pmatrix} C \\ D \end{pmatrix} S = 0.
\end{equation}
Then by using the same connectivity matrix $S$ and a new complex matrix $\widetilde C= \begin{pmatrix} C \\ D \end{pmatrix}$, we obtain a new CRN that enforces the conservative laws \eqref{eq:conv}.
%By leaving $S$ untouched, we have kept constant the connectivity of the matrix. Similarly, by keeping the number of columns constant in the complex matrix, the number of complexes also remains unchanged. 
To find $D$ satisfying \eqref{eq:new CRN}, we choose $D$ such that
\begin{equation}\label{eq:D}
    D = U-WC
\end{equation}
for an $m \times |\C|$ matrix $U$ with each row of the form $u_i \mathbf{1}^\top=u_i(1,1,\dots,1)^\transpose$ for some positive integer $u_i$. Since $\vec{1}^T S=\vec 0$, this guarantees \eqref{eq:new CRN}. 

A new network can be generated with the complex matrix $\widetilde C$ and the connectivity matrix $S$. We call this network a \emph{regular slack network} with species $\widetilde \S$ and complexes $\widetilde \C$. For the $j$-th reaction $\nu_j \to \nu'_j \in \Re$ corresponding to the $j$-th column of the matrix $CS$, we denote by $\tilde \nu_j \to \tilde \nu'_j \in \widetilde \Re$ the reaction corresponding to the $j$-th column of the matrix $\widetilde C S$. That is, $\tilde \nu_j\to \tilde \nu'_j$ is the reaction obtained from $\nu_j \to \nu'_j$ by adding slack reactants. 
%and we denote the slack network by %$(\widetilde \S, \widetilde \C, \widetilde \Re, \widetilde \K)$ which denotes the set of species, complexes, reactions and rate constants.

We model the slack network by $X^{\vec N}(t)=(X^{\vec N}_1(t),  X^{\vec N}_2(t),\dots, X^{\vec N}_d(t))$ where each of the entries represents the count of species in the new network. Note that the count of each slack reactant $Y_i$ is fully determined by species counts $X^{\vec{N}}_i$'s because of the conservation law(s). As such, we do not explicitly model the $Y_i$'s.

 The intensity function $\lambda^{\vec N}_{\tilde \nu \to \tilde\nu}$ of $X^{\vec N}$ for a reaction $\tilde \nu \to \tilde \nu'$ is defined as 
\begin{align}\label{eq:slack intensity}
   \lambda^{\vec N}_{\tilde \nu \to \tilde \nu'}(\vec x)=\lambda_{\nu\to \nu'}(\vec x) \prod_{i=1}^m \mathbbm{1}_{\{y_i \ge \tilde \nu_{d+1}\}},
\end{align}
where $y_i=N_i-(w_{i1}x_1+w_{i1}x_1+\cdots+w_{id}x_d)$, and $\nu\to \nu'$ is the reaction in $\Re$.
Then we denote by $(\widetilde \S, \widetilde \C, \widetilde \Re,\Lambda^{\vec N})$ a new system with slack reactants obtained from the algorithm, where $\K_N$ is the collection of kinetic rates $\{ \lambda^N_{\tilde \nu \to \tilde \nu'} : \tilde \nu \to \tilde \nu' \in \widetilde \Re\}$. We refer this system with slack reactants to a \emph{slack system}.
%\textcolor{red}{(We want a CRN to be a set of propensities not reaction rates. This needs to happen in Section 2 and here.)}

Here we highlight that the connectivity matrix $S$ and the complex matrix $C$ of the original network are preserved for a new network with slack reactants. Thus the original network and the new network obtained from the algorithm have the same connectivity. This identical connectivity allows the qualitative behavior of the original network, which solely depends on $S$ and $C$, to be inherited to the new network with slack reactants. We particularly exploit the inheritance of accessibility and the inheritance of Poissonian stationary distribution in Section \ref{sec:acc}.

\begin{remark}
\textcolor{black}{A single conservation relation, such as $\vec w\cdot X +y= N$ with a non-negative vector $\vec w$, is sufficient to truncate the given state space into a finite state space. Hence, in this manuscript, we mainly consider a slack network that is obtained with a single conservation vector $\vec w$ and a single slack reactant $Y$. }
\end{remark}

%There will be, in general, many stoichiometries that will satisfy the conservation laws, but in practice, a particular choice often emerges as obvious. When such a stoichiometry is not readily apparent, we provide steps to find one that preserves the structural deficiency \cite{feinberg1972complex} of the CRN.
\begin{remark}
Although we primarily think about bounding our species counts from above, we could also bound species counts from below by choosing negative integers for $w_{i,j}$.
\end{remark}

\begin{example}\label{ex:why intrusive}
We illustrate our slack algorithm with an example CRN consisting of two species and five reactions indicated by edge labels on the following network:
\begin{equation}\label{eq:ex original}
   \begin{tikzpicture}[baseline={(current bounding box.center)}, scale=0.8,  state/.style={circle,inner sep=2pt}]
   %states
   \node[state] (1) at (0,3)  {$\emptyset$};
   \node[state] (2) at (4,3)  {$A$};
   \node[state] (3) at (0,-1)  {$B$};

   %edges
   \path[->]
    (1) edge[bend left] node   { \circled{1}} (2)
    (2) edge[bend left] node   {\circled{2} } (1)
    (2) edge[bend left] node  { \circled{3}} (3)
    (3) edge[bend left] node  { \circled{4}} (2)
    (1) edge node  { \circled{5}}  (3)
    ;
     \end{tikzpicture}
\end{equation}
We enumerate the complexes in the order of $\emptyset,A$ and $B$. We order the reactions according to their labels on the network. Thus the connectivity matrix $S$ and complex matrix $C$ are defined as follows:
$$S=\begin{bmatrix}
-1&1&0&0&-1\\
1&-1&-1&1&0\\
0&0&1&-1&1\\
\end{bmatrix},$$
$$C=\begin{bmatrix}
0 &1& 0\\
0 & 0 &1
\end{bmatrix}.$$  Suppose we set a conservation bound $A+B\le N$ for some $N>0$. Then the matrix $A=\begin{bmatrix}
1&1
\end{bmatrix}$ and by \eqref{eq:D}, we have $D=\begin{bmatrix}
u&u&u
\end{bmatrix}-\begin{bmatrix}
0&1&1
\end{bmatrix}$. 

\noindent When $u=1$, the network with slack reactant $Y$ is
\begin{equation}\label{eq:ex slack1}
   \begin{tikzpicture}[baseline={(current bounding box.center)}, scale=0.8,  state/.style={circle,inner sep=2pt}]
   %states
   \node[state] (1) at (0,2)  {$Y$};
   \node[state] (2) at (2,2)  {$A$};
   \node[state] (3) at (0,0)  {$B$};

   %edges
   \path[->]
    (1) edge[bend left] node   {} (2)
    (2) edge[bend left] node   { } (1)
    (2) edge[bend left] node  { } (3)
    (3) edge[bend left] node  { } (2)
    (1) edge node  { }  (3)
    ;
     \end{tikzpicture},
\end{equation}
where we have the conservation relation $A+B+Y=N$, where $N=A(0)+B(0)+Y(0)$. 

When $u=2$, the network with slack reactant $Y$ is
\begin{equation}\label{eq:ex slack2}
   \begin{tikzpicture}[baseline={(current bounding box.center)}, scale=0.8,  state/.style={circle,inner sep=2pt}]
   %states
 \node[state] (1) at (0,2)  {$2Y$};
   \node[state] (2) at (3,2)  {$A+Y$};
   \node[state] (3) at (0,0)  {$B+Y$};

   %edges
   \path[->]
    (1) edge[bend left] node   {} (2)
    (2) edge[bend left] node   { } (1)
    (2) edge[bend left] node  { } (3)
    (3) edge[bend left] node  { } (2)
    (1) edge node  { }  (3)
    ;
     \end{tikzpicture},
\end{equation}
where we have the same conservation relation $A+B+Y=N$.

%Here we explain why network \eqref{eq:ex slack2} is more intrusive than network \eqref{eq:ex slack1}. 
Here we explain why network \eqref{eq:ex slack1} is preferred to \eqref{eq:ex slack2} because it is less intrusive.
Let $X=(X_A,X_B,X_Y)$ and $X'=(X'_A,X'_B,X'_Y)$ be the stochastic process associated with networks \eqref{eq:ex slack1} and \eqref{eq:ex slack2} respectively. Suppose the initial state of both $X$ and $X'$ is $(0,0,N)$. $X$ can reach the state $(0,N,0)$ by transitioning $N$ times with the reaction $Y\to B$. This state corresponds to the state $(0,N)$ in the original network \eqref{eq:ex original}. On the other hand, $X'$ cannot reach the state $(0,N,0)$. This is because the states $(0,0,N)$ and $(0,N-1,1)$ are the only states from which $X'$ jumps to $(0,0,N)$. However, no reaction in \eqref{eq:ex slack2} can be fired at the states since no species $Y$ presents at those states.

Consequently, one state, that is accessible in the original network \eqref{eq:ex original}, is lost in the system associated with network \eqref{eq:ex slack2}. However, it can be shown that the stochastic process associated with network \eqref{eq:ex slack1} preserves all the states of the original network. This occurs mainly because the matrix $D$ for network \eqref{eq:ex slack1} is sparser than the matrix $D$ for network \eqref{eq:ex slack2}.
We discuss how to minimize the effect of slack reactants in Section \ref{subsec:properties of slack}.
\end{example}

\subsection{Optimal Slack CRNs for effective approximation of the original systems}\label{subsec:properties of slack}

%\textcolor{red}{(1) (Our construction is unique up to the choice of the conservation vector $\vec{w}$ and the matrix $U$.), (2) We have two ways to optimize Slack: Choice of vector and Connectivity.}

The algorithm we introduced to construct a network with slack reactants provides is valid and unique up to any user-defined conservation bounds \eqref{eq:ineq}, and the outcome is the matrix $D$, shown in \eqref{eq:D}, that indicates the stoichiometric coefficient of slack reactants at each complex.

As we showed in Example \ref{ex:why intrusive}, to minimize the `intrusiveness' of a slack network, we can simplify a slack network by setting as many ${D_{ij}=0}$ as possible. To do that, we choose the entries of $\vec{u}$ such that $u_i$ is the maximum entry of the $i$-th row of $AC$. We further optimize the effect of the slack reactants by removing the `redundant' stoichiometric coefficient of slack reactants. For example, for a CRN,
\begin{align}\label{eq:original}
    \emptyset \rightleftharpoons A \to 2A,
\end{align}
the algorithm generates the following new CRN with a single slack reactant $Y$ 
\begin{align}\label{eq:non breaking}
    2Y \rightleftharpoons A+Y \to 2A.
\end{align}

However, by breaking up the connectivity, we can also generate another network
\begin{align}\label{eq:breaking}
    Y \rightleftharpoons A, \quad A+Y \to 2A.
\end{align}
The network in \eqref{eq:non breaking} is more intrusive than the network in \eqref{eq:breaking} in the sense of accessibility. At any state where $Y=0$, the system associated with \eqref{eq:non breaking} will remain unchanged  because no reaction can take place. However, the reaction $A\to Y$ in \eqref{eq:breaking} can always occur despite $Y=0$. Hence \eqref{eq:breaking} preserves the accessibility of the original system associated with \eqref{eq:original} as any state for $A$ is accessible from any other state in the original reaction system \eqref{eq:original}. We refer such a system with slack reactants generated by canceling redundant slack reactants to an \emph{optimized slack system}.
In Section \ref{sec:acc}, we explore the accessibility of an optimized reaction network with slack reactants in a more general setting.
%Even if the connectivity is not maintained as the connected component in \eqref{eq:original} is broken up in \eqref{eq:breaking}, the accessibility is remarkably 

Finally, we can make a network with slack reactants admit a better approximation of a given CRN by choosing an optimized conservation relation in \eqref{eq:ineq}. First, we assume that only a single conservation law and a single slack reactant are added to a given CRN. For the purpose of state space truncation onto finitely many states, a single conservation law is enough as all species could be bounded by $N$ as shown in \eqref{eq:ineq}. Let this single conservation law be \[w_1 X_1+w_2 X_2 + \cdots  w_d X_d+Y=N.\] Then the matrix $W$ in \eqref{eq:L} is a vector $(w_1,w_2,\dots,w_d)^\transpose$, and we denote this by $\vec w$. By the definition of the intensities \eqref{eq:slack intensity} for a network with slack reactants, some reactions are turned off when $Y =0$, i.e. $w_1X_1+\dots+w_d X_d=N$. Geometrically, a reaction outgoing from the hyperplane  $w_1X_1+\dots+w_d X_d=N$ is turned off (Figure \ref{fig:example}).

Hence we optimize the estimation with slack reactants by minimizing such intrusiveness of turned off reactions. To do that, we choose $\vec v$ which minimizes the number of the reactions in  $\{ \nu\to \nu' \in \Re : (\nu'-\nu)\cdot \vec w > 0\}$.

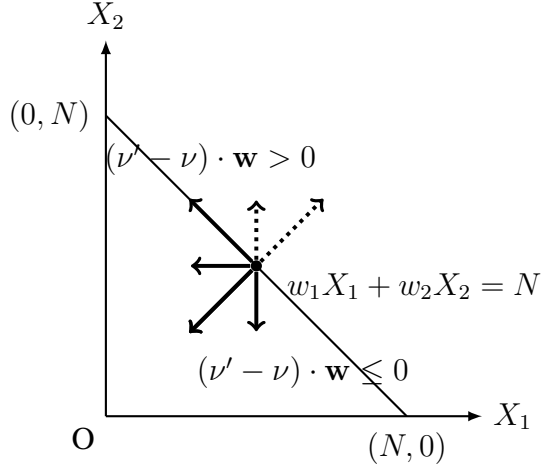
\begin{figure}
\centering
%\captionsetup{justification=centering}\label{fig:geometric}
\begin{tikzpicture}
        \draw[thick,-latex] (0,0) -- (5,0)node[right]{$X_1$};
        \draw[thick,-latex] (0,0) -- (0,5)node[above]{$X_2$};
        \node at (-0.3,-0.3) {O};
    
            \node[inner sep=1.5pt,label=below:{$(N,0)$}] (-11) at (4,0) {};
            \node[inner sep=1.5pt,label=left:{$(0,N)$}] (0)at (0,4) {};
            \node[fill,circle,inner sep=1.5pt,label=below left:{}] (1) at (2,2) {};
             \node[inner sep=1.5pt,label=below left:{}] (2) at (1,2) {};
             \node[inner sep=1.5pt,label=below left:{}] (3) at (1,1) {};
             \node[inner sep=1.5pt,label=below left:{}] (4) at (2,1) {};
                \node[inner sep=1.5pt,label=below right:{$(\nu'-\nu) \cdot \vec w\le 0$}] (5) at (1,1) {};
                \node[inner sep=1.5pt,label=above left:{\hspace{1cm}\textcolor{black}{$ (\nu'-\nu)\cdot \vec{w}> 0$}}] (6) at (3,3) {};
                 \node[inner sep=1.5pt,label=below left:{}] (7) at (2,3) {};
                   \node[inner sep=1.5pt,label=below left:{}] (8) at (1,3) {};
                   
        \draw[thick,-, shorten >= 0cm, shorten <= 0cm] (0,4) -- (4,0) %name path =b
        node[pos=0.65,above] {$\hspace{3cm} w_1X_1+w_2X_2 = N$};
       
        \path[shorten >=2pt,->,shorten <=0pt]
        (1) edge[line width=1.5pt] node {} (2)
         (1) edge[line width=1.5pt] node {} (3)
          (1) edge[line width=1.5pt] node {} (4)
           (1) edge[line width=1.5pt] node {} (5)
               (1) edge[line width=1.5pt] node {} (8)
            (1) edge[dotted,line width=1.5pt] node {} (6)
            (1) edge[dotted, line width=1.5pt] node {} (7);
%        \foreach \x/\y/\name in {0/4/B,4/0/A}{
%            \node[fill,circle,inner sep=1.9pt,label={[inner sep=0pt]below left:\name($\x, \y$)}] at (\x,\y) {};        }

        \end{tikzpicture}

        \caption{The dotted arrows correspond to the reaction vectors that are turned off (i.e. the associated reaction intensity is zero) at the bound of the state space for a slack system as described in Section~\ref{subsec:properties of slack}.}\label{fig:example}
       \end{figure}

\section{Comparison to Other Truncation-Based Methods}\label{sec:compare to}

%\textcolor{red}{
In this Section we demonstrate that our method can potentially resolve limitations in calculating mean first passage times observed in other methods of state space truncation, namely sFSP and the finite-buffer method. Both methods require the user to make decisions about the state-space truncation that may introduce variability in the results.
While all methods will converge to the true result as the size of the state space increases, we show our method is less dependent on user defined quantities.
%gae new
This minimizes additional decision-making on the part of the user that can lead to suboptimal results, especially in a context where the solution of the original system is not known.

\subsection{Comparison to the sFSP method}
A well known state truncation algorithm is known as the \emph{Finite State Projection}  (FSP) method \cite{munsky2006finite}. For a given continuous-time Markov chain, the associated FSP model is restricted to a finite state space.  If the process escapes this truncated space, the state is sent to a designated absorbing state (see Figure \ref{fig:schematic}B). For a fixed time $t$, the probability density function of the original system can be approximated by using the associated FSP model with sufficiently many states. The long-term dynamics of the original system, however,
is not well approximated because the probability flow of the FSP model leaks to the designated absorbing state in the long run.

To fix this limitation of FSP, Gupta et al. proposed the \emph{stationary Finite State Projection} method (sFSP)\cite{gupta2017finite}. This method also projects the original state space onto a finite state space as the FSP method intended to. But sFSP does not create a designated absorbing state, as all outgoing transitions from the projected finite state space are merged to a single state $x^*$ `inside' the finite state space (Figure \ref{fig:schematic}C). The sFSP has been frequently used to estimate the long-term distribution of discrete stochastic models. However, if the size of the truncated state space is not sufficiently large, this method could fail to provide accurate estimation for the first passage time.
%because of the `transport' to the designated state. 
To demonstrate this case, we consider the following simple 2-dimensional model. In the network shown in Figure \ref{fig:fspvsslack}A, two $X_1$ proteins are dimerized into protein $X_2$ while $X_1$ is being produced at relatively high rate. The state space of the original model is the full 2-dimensional positive integer grid.  We estimate the time until the system reaches one of the two states indicated in red in Figure~\ref{fig:fspvsslack}C, and we use alternative methods to do this.

%if the target states for the first passage time are close to the designed state where the Markov process returns when it escapes the confined finite set. 

\begin{figure*}[!htb]
\centering
\includegraphics[]{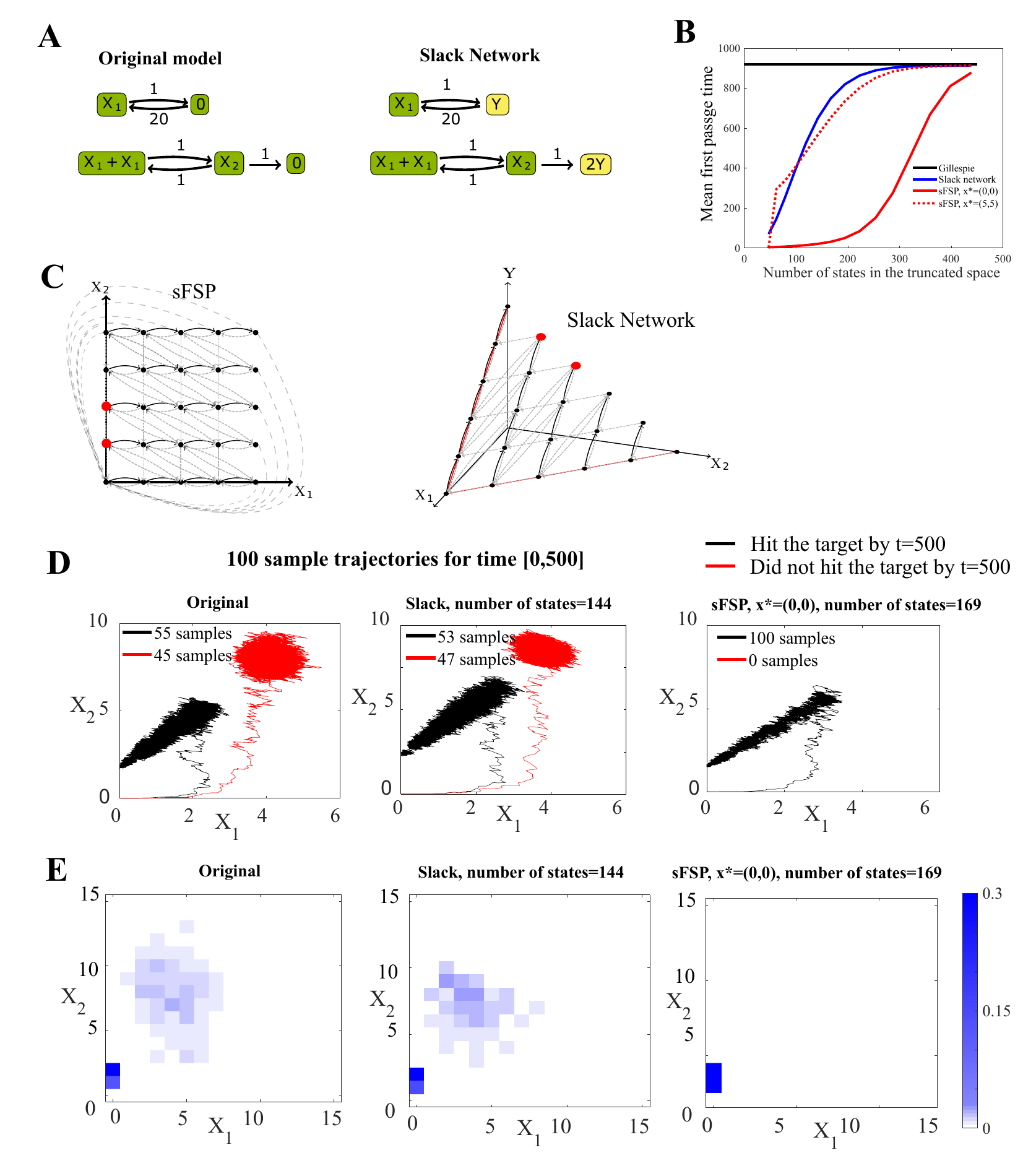}
\caption{Comparison between an sFSP model and a slack network for the same reaction system. A. Original 2-dimensional system and its slack network. B. Mean first passage time as a function of the truncated state size, comparing sFSP and the slack reactant method. C. The state space truncation corresponding to the sFSP method and the slack method for this model. Target states for the first passage time are indicated in red. D. Mean of 100 sample trajectories obtained by Gillespie simulations for the original, the slack system and the sFSP system. The red lines indicate the mean of the trajectories that have touched the target states within [0,500], while the black lines are the mean of the trajectories that did not touch the target space during this time. E. Probability denstiy heat maps of the stochastic processes modeled under the original, the slack system and the sFSP model, respectively, at $t=500$. }\label{fig:fspvsslack}
\end{figure*}

For the sFSP, we project the original state space onto the rectangle by restricting $X_1\le N$ and $X_2\le N$ for some $N>0$, and we fix the origin $(0,0)$ as the designated state $x^*$ (Figure \ref{fig:fspvsslack}C). If the process associated with the sFSP model escapes the rectangle, it transports to the designated state immediately. On the other hand, we also consider a slack network shown in Figure \ref{fig:fspvsslack}B, where we introduce the conservation law $X_1+2X_2+Y=N$ for some $N>0$. Let $\tau=\inf \{t\ge 0: X_1=1 \text{ and } X_2 \in \{1,2\}\}$ be the first passage time we want to estimate. 

By using the inverse matrix of the truncated transition matrix for each method, as detailed in \cite{chou2014first}, we obtain the mean of $\tau$ by using different values of $N$. We also obtain an `almost' true mean of $\tau$ by using $10^5$ Gillespie simulations of the original process. As shown in Figure \ref{fig:fspvsslack}B, the slack network model provides a more accurate mean first passage time estimation for the size of truncation in between 100 and 400 if the designated return state of sFSP is $(0,0)$.

The inaccurate estimate from the sFSP is due to the choice of a return state. The sFSP model escapes the confined space often because the production rate of $X_1$ is relatively high. When it returns back to the origin, it is more likely to visit the target states $\{X_1=1 \text{ and } X_2\in \{1,2\}\}$ than the original process. 

Figure~\ref{fig:fspvsslack} shows that the mean first passage time of this system using sFSP depends significantly on the location of the chosen designated state.  One of the two states is a particularly poor choice for sFSP, but it illustrates the idea that without previous knowledge of the system it can be difficult to know which states will perform well.  
%if you are not careful about the state that you designate the return, this drastically changes the result. We have purposely chosen a particularly poor node for the sFSP.

We display the behavior of individual solutions of the original model, the slack network, and the sFSP model in Figure \ref{fig:fspvsslack}D. The trajectory plots show that within the time interval $[0,500]$, almost half of the 100 samples from both the original model and the slack network model stay far away from the target states, while all the 100 sample trajectories from the sFSP model stay close to the target states. We also illustrate this point in Figure \ref{fig:fspvsslack}E with heat maps of the three models at $t=500$. Note that only in the case of the sFSP, the probability densities are concentrated at the target states.

\subsection{Comparison to the finite buffer method}\label{subsec:compare to buffer}
The finite buffer method was proposed to estimate the stationary probability landscape with state space truncation and a novel state space enumeration algorithm \cite{cao2008optimal, cao2016accurate}. 
For a given stochastic model associated with a CRN, the finite buffer method sets inequalities among species such as \eqref{eq:ineq}, so-called buffer capacities. Then at each state $\vec x$, the transition rate of a reaction $\nu\to \nu'$ is set to be zero if at least one of the inequality does not hold at $\vec x+\nu'-\nu$.  We note the algorithm, described in Section~\ref{subsec:algo}, for generating a slack network uses the same inequalities.  Thus,  the finite buffer method and the slack reactant method truncate the state space in the same way.
We have shown, in Section~\ref{subsec:properties of slack} this type of truncation can create `additional' absorbing states. These additional absorbing states change the accessibility between the states which means the mean first passage times cannot be accurately estimated. However, the regular slack systems preserve the network structure of the original network. Hence we are able to prove, as we already noted, that regular slack networks inherit the accessibility of the original network as long as the original network is `weakly reversible' as we will define below.

%We particularly exploit the inheritance of accessibility in  However, we have shown that our regular slack network always retains the same accessibility,  considering optimized slack networks,  use of slack reactants, \ref{eq:ex slack2}, prohibits the stochastic process from arriving such an absorbing state. 

% \begin{equation*}
%   \begin{tikzpicture}[baseline={(current bounding box.center)}, scale=0.8,  state/.style={circle,inner sep=2pt}]
%   %states
%   \node[state] (1) at (0,2)  {$\emptyset$};
%   \node[state] (2) at (2,2)  {$A$};
%   \node[state] (3) at (0,0)  {$B$};

%   %edges
%   \path[->]
%     (1) edge[bend left] node   { } (2)
%     (2) edge[bend left] node   { } (1)
%     (2) edge[bend left] node  { } (3)
%     (3) edge[bend left] node  { } (2)
%     (1) edge node  { } (3)
%     ;
%      \end{tikzpicture}
% \end{equation*} 
We demonstrate this disparity between the the finite buffer and slack methods with the following network. Consider the mass-action system  \eqref{eq:ex original} with a fixed initial state $\vec x_0=(a_0,b_0)$. We are interested in estimating the mean first passage time to a target state $\vec x_T=(10,10)$.
Note that the state space is irreducible (i.e., every state is accessible from any other state) as the network consists of unions of cycles. This condition, the union of cycles, is precisely what is meant by weakly reversible.\cite{feinberg1987chemical,august2010solutions} Thus, the original stochastic system has no absorbing state and is accessible to the target state $\vec x_T$.

To use the finite buffer method on this network, we set $2X_A+X_B\le N$ as the buffer capacity, where $X=(X_A,X_B)$ is the associated stochastic process. (Here we choose $N > 30$ so the state space contains the target state $\vec x_T$.) Hence when $X$ satisfies $2X_A+X_B = N$, the reactions $\emptyset \to A$, $B\to A$ and $\emptyset \to B$ cannot be fired as $2X_A+X_B$ exceeds the buffer capacity. We now demonstrate the system has a new absorbing state. By first using reaction $A \to B$, to deplete all $A$, and then $\emptyset \to B$, every state can reach the state $(0,N)$ in finite time with positive probability.
%Suppose we are at state $X_A, X_B$. we begin with exactly $N$ $A$ and  that by transitioning all $A$'s to $B$'s with the reaction $A\to B$, only species $B$ remains. Furthermore, with the reaction $\emptyset \to B$, the process $X$ can reach the state $(0,N)$ with positive probability. 
The state $(0,N)$ is absorbing state because no other reactions can occur. Reactions $A \to \emptyset$ and $A\to B$ require at least one $A$ species, and any other reactions lead to states exceeding the buffer capacity. Therefore, the finite buffer method has introduced a new absorbing state not present in the original model so that it is not accessible to $\vec x_T$ with positive probability.
%and would thus complicate any first passage time calculations.
%with the buffer capacity $2X_A+X_B\le N$ can be stuck in $(0,N)$ with positive probability, while the original model is never absorbed anywhere.

Now, we show the explicit network structure of our slack network formulation will preserve the accessibility of the original system. We consider the same inequality $2X_A+X_B \le N$ as above with $N>30$. We generate the slack network by using the algorithm shown in Section \ref{subsec:algo},
\begin{equation}\label{eq:slack no absorbing}
  \begin{tikzpicture}[baseline={(current bounding box.center)}, scale=0.8,  state/.style={circle,inner sep=2pt}]
  %states
  \node[state] (1) at (0,2)  {$2Y$};
  \node[state] (2) at (2,2)  {$A$};
  \node[state] (3) at (0,0)  {$B+Y$};

  %edges
  \path[->]
    (1) edge[bend left] node   { } (2)
    (2) edge[bend left] node   { } (1)
    (2) edge[bend left] node  { } (3)
    (3) edge[bend left] node  { } (2)
    (1) edge node  { } (3)
    ;
     \end{tikzpicture}
\end{equation}
Note that the associated stochastic process $X=(X_A,X_B,Y)$ admits the conservation relation $2X_A+X_B+Y=N$ implying that $2X_A+X_B \le N$. The state $(0,N,0)$ can not be reached as the only state that is accessible to $(0,N,0)$ is $(0,N-1,2)$, but it violates the conservation law.

As we highlighted in Section \ref{subsec:properties of slack}, slack networks preserve the connectivity of the original network \eqref{eq:ex original}, hence the network \eqref{eq:slack no absorbing} is also weakly reversible. Thus the state space of the stochastic process associated with the slack network is irreducible by Corollary \ref{cor:irreducibility}. This implies that there is no absorbing state and the system is accessible to $\vec x_T$, unlike the stochastic process associated with the finite buffer relation.

\section{Convergence Theorems for slack networks}\label{sec:theorems}

In this section we establish theoretical results on the convergence of properties of a slack network to the original network. 
%background for the estimation of properties of a given network using a slack network. 
(Proofs of the theorems below are provided in Appendix \ref{app:proofs}.)
Many of these results rely on theorems from Hart and Tweedie\cite{hart2012convergence} who studied when the probability density function of a truncated Markov process converges to that of the original Markov process. We employ the same idea of their proof to show the convergence of a slack network to the original network. 

By assuming ``uniforming mixing,'' we show the convergence of the stationary distribution of the slack system to the stationary distribution of the original system as the conservation quantity $N$ grows. Furthermore, we show the convergence of mean first passage times for the slack network to the true mean first passage times. In particular, all these conditions hold when there is a Lyapunov function for the original system.
%as well as the convergence of mean first passage time for the slack network. 

In this Section, assume that a given CRN  $(\S,\C,\Re,\Lambda)$ is well defined for all time $t$ and  let $(\widetilde \S, \widetilde \C,\widetilde \Re, \Lambda^N)$ be an associated slack network obtained with a single conservative quantity $\vec w\cdot X \le N$. We denote by $X$ and $X_N$ the associated stochastic processes the original CRN and the slack network respectively.
%for  $(\S,\C,\Re, \Lambda)$ and $(\widetilde \S, \widetilde  \C,\widetilde  \Re, \K_N)$, respectively. 
We fix the initial state for both systems, i.e., $X(0)=X_N(0)=\vec x_0$ for some $\vec x_0$ and for each $N$. (This means we can only consider slack systems where $N$ is large enough so that $\vec w \cdot \vec x(0) < N$.) Assume that both the original and slack systems are irreducible, and denote by $\mathbb S$ and $\mathbb S_N$ the state spaces for each respectively. (In Section~\ref{sec:acc}, we prove accessibility properties that the slack system can inherit from the original system.) 

%Then we denote by $\mathbb S$ the irreducible state space of $X$. We further assume that $X_N(0)=\vec x_0$ for each $N$ such that $\vec x_0$ is contained in the truncated state space of $\mathbb S$ with the conservation quantity $N$. We denote by $\mathbb S_N \subseteq\{x:w_1x_1+w_2x_2+\cdots+w_dx_d\le N\}$ the state space of $X_N$.

Notice that every state in $\mathbb S_N$ satisfies our conservation inequality. That is, for every $\vec x \in \mathbb S_N$ we have $\vec w \cdot \vec x \le N$.
% set of $\mathbb S_N \subseteq\{x:w_1x_1+w_2x_2+\cdots+w_dx_d\le N\}$ the state space of $X_N$.
It is possible that $\mathbb S_N=\mathbb S_{N+1}$ for some $N$. For simplicity, we assume that the truncated state space is always enlarged with respect to the conservative quantity $N$, that is $\mathbb S_N \subset \mathbb S_{N+1}$ for each $N$. (For the general case, we could simply consider a subsequence $N_k$ such that $N_k<N_{k+1}$ and $\mathbb S_{N_k} \subset \mathbb S_{N_{k+1}}$ for each $k$.) 

% For each $\mathbb S_N$, we define a `boundary imposed by slack reactants` as
% \begin{align*}
%     \partial \mathbb S_N=\{ \vec x \in \mathbb S_N : N-\vec w\cdot \vec x < \tilde \nu_{d+1} \text{ for some } \tilde \nu \to \tilde \nu' \in \widetilde \Re \}. 
% \end{align*}
% Note that $N-\vec w\cdot \vec x$ is the copy number of a slack reactant $y$ at $\vec x$. Note further that $\tilde \nu_{d+1}$ is the coefficient of a slack reactant in a complex $\tilde \nu \to \tilde \nu'$. Thus if $\vec x \in \partial \mathbb S_N$, then $\lambda_{\tilde \nu \to \tilde \nu'}(\vec x)=0$ for some reaction $\tilde \nu \to \tilde \nu' \in \widetilde \Re$ due to a shortage of the slack reactant in a regular slack system. Hence if $\mathbb S_N \subset \mathbb S_{N+1}$, then $\mathbb S_{N+1}\setminus \mathbb S_N=\partial \mathbb S_{N+1}$.

As defined in Section \ref{subsec:CRN}, $\lambda_{\nu \to \nu'} \in \Lambda$ is the intensity of a reaction $\nu \to \nu'$ for the associated stochastic system $X$. We also denote by $\lambda^N_{\tilde \nu\to \tilde \nu '}\in \Lambda^N$ the intensity of a reaction $\tilde \nu \to \tilde \nu'$ for the associated stochastic system $X_N$. Finally we let $p(\vec x,t)$ and $p_N(\vec x,t)$ be the probability density function of $X$ and $X_N$, respectively.  We begin with the convergence of the probability density functions of the slack network to the original network with increasing $N$.

\begin{theorem}\label{thm:conv of probs}
For any $\vec x \in \mathbb S_N$ and $t \geq 0$, we have
\begin{align*}
    \lim_{N\to \infty} |p(\vec x,t)-p_N(\vec x,t)|=0.
\end{align*}
\end{theorem}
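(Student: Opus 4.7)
The plan is to prove this by a coupling argument in the spirit of Hart--Tweedie, exploiting the fact that the slack network's boundary is encoded entirely in the indicator factors of the slack intensities. The central observation is that
\begin{align*}
\lambda^N_{\tilde\nu\to\tilde\nu'}(\vec x) = \lambda_{\nu\to\nu'}(\vec x)\,\mathbbm{1}_{\{N-\vec w\cdot\vec x\,\ge\, D_j\}},
\end{align*}
where $j$ indexes the source complex of the reaction $\nu \to \nu' \in \Re$ and $D_j$ is its slack-coefficient. Setting $D^{*} := \max_j D_j$, the slack intensity coincides with the original intensity \emph{at every channel simultaneously} as long as $\vec x$ lies in the ``safe zone'' $\{\vec x : \vec w\cdot\vec x \le N - D^{*}\}$. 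The approach is therefore to build $X$ and $X_N$ on a common probability space via Kurtz's random-time-change representation, driving each reaction channel $\nu \to \nu'$ by the same unit-rate Poisson process $Y_{\nu\to\nu'}$ in both systems.

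Next, I would run this coupling and check, by a short induction on successive jump epochs, that $X_N(s)=X(s)$ for all $s\in[0,t]$ on the event
\begin{align*}
\Omega_{N,t} := \Bigl\{\sup_{0\le s\le t}\vec w\cdot X(s) \le N-D^{*}\Bigr\}.
\end{align*}
Indeed, on $\Omega_{N,t}$ both processes sit in the same state at every jump time prior to $t$, and in that state every reaction-channel intensity is identical in the two systems, so the common Poisson drivers produce the same next jump. Since $X(0)=X_N(0)=\vec x_0$ with $\vec w\cdot\vec x_0<N$, the induction closes. This immediately gives
\begin{align*}
|p(\vec x,t)-p_N(\vec x,t)| \le P(X(t)\neq X_N(t)) \le P(\Omega_{N,t}^c).
\end{align*}

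Finally, I would dispatch $P(\Omega_{N,t}^c)\to 0$ using the standing hypothesis that the original CRN is well defined for all $t \ge 0$: non-explosion implies that on $[0,t]$ the path of $X$ a.s.\ performs only finitely many jumps and visits only finitely many states, so $M_t := \sup_{0\le s\le t}\vec w\cdot X(s)$ is finite almost surely. The events $\{M_t > N - D^{*}\}$ form a decreasing family with empty intersection, and continuity of probability yields $P(M_t > N-D^{*}) \to 0$ as $N\to\infty$.

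The one delicate step is phrasing the coupling rigorously enough to rule out spurious early divergences: before the hitting time $\tau_N := \inf\{s : \vec w\cdot X(s) > N - D^{*}\}$, one must verify at every Poisson clock ring that both processes are in the same state and both intensities agree, so that either both fire or neither fires, and in particular no channel is silenced in $X_N$ while still active in $X$. This is routine bookkeeping in the random-time-change formalism, and crucially requires no quantitative tail estimate on $X$ beyond qualitative non-explosion; no Lyapunov function, uniform mixing, or bound on the jump-size distribution is needed at this stage.
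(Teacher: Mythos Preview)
Your argument is correct. Both your proof and the paper's rest on the same mechanism---couple $X$ and $X_N$ so that they coincide until the first exit from a region where all slack indicators are active, then use non-explosion of $X$ to send that exit time to infinity---but the packaging differs. The paper routes the comparison through an auxiliary FSP model on $\mathbb S_{N-1}$, obtaining the sandwich
\[
p^{\text{FSP}}_{N-1}(\vec x,t)\;\le\; p_N(\vec x,t)\;\le\; p^{\text{FSP}}_{N-1}(\vec x,t)+\bigl(1-p^{\text{FSP}}_{N-1}(\mathbb S_{N-1},t)\bigr),
\]
and then closes both sides via the monotone convergence $p^{\text{FSP}}_{N-1}\nearrow p$ (itself a consequence of $T_N\to\infty$ a.s.). Your direct Kurtz-representation coupling skips the FSP intermediary and collapses everything to the single bound $|p-p_N|\le P(\Omega_{N,t}^c)$. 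This is cleaner and, by working with the explicit safe-zone threshold $N-D^{*}$, transparently handles slack coefficients larger than~$1$; the paper's choice of $\mathbb S_{N-1}$ as buffer tacitly needs the maximal slack coefficient to be at most~$1$, though the repair is trivial. The paper's detour does buy one thing you do not get for free: the monotone convergence $p^{\text{FSP}}_{N-1}\nearrow p$ is reused verbatim in the proof of Theorem~\ref{thm:conv of stationary}, so if you adopt your route here you will need to supply that ingredient separately downstream.
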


A Markov process defined on a finite state space admits a stationary distribution. Hence $X_N$ admits a stationary distribution $\pi_N$. If the slack system satisfies the condition of ``uniform mixing'', that is the convergence rate of $\Vert p_N(\vec x,t)-\pi_N(\vec x) \Vert_1$ is uniform in $N$, then we have the following result:
%Furthermore, by assuming ``uniform mixing'' of a slack network (i.e., the convergence rate of the probability distribution associated with a slack network to its stationary distribution is independent of the size of truncation)

\begin{theorem}\label{thm:conv of stationary}
Suppose $X$ admits a stationary distribution $\pi$. Suppose further that there exists a positive function $h(t)$, which is independent of $N$, such that $\Vert p_N(\cdot,t)-\pi_N\Vert_1\le h(t)$ and $\dlim_{t\to \infty}h(t)=0$.
Then 
\begin{align*}
\lim_{N\to \infty}\Vert \pi-\pi_N \Vert_1 = 0.
\end{align*}
\end{theorem}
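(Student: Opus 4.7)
The plan is to combine the pointwise convergence already established in Theorem~\ref{thm:conv of probs} with the hypothesized uniform mixing, using the triangle inequality
\begin{align*}
\Vert \pi - \pi_N \Vert_1 \le \Vert \pi - p(\cdot,t)\Vert_1 + \Vert p(\cdot,t) - p_N(\cdot,t)\Vert_1 + \Vert p_N(\cdot,t) - \pi_N\Vert_1,
\end{align*}
which is valid for every $t \ge 0$ and every $N$. The third term is bounded by $h(t)$ uniformly in $N$ by hypothesis, the first term tends to $0$ as $t \to \infty$ by ergodicity of the original process, and the middle term tends to $0$ as $N \to \infty$ for each fixed $t$ as a consequence of Theorem~\ref{thm:conv of probs}. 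The order of quantifiers matters: I would select $t$ first to control the first and third terms, then let $N \to \infty$ to kill the middle term.

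Concretely, given $\epsilon > 0$, I would first pick $t^{*}$ large enough that both $h(t^{*}) < \epsilon/3$ and $\Vert \pi - p(\cdot,t^{*})\Vert_1 < \epsilon/3$. The latter is legitimate because $X$ is assumed irreducible and non-explosive (the CRN is ``well defined for all time $t$'') and admits a stationary distribution $\pi$, which for a countable-state continuous-time Markov chain forces total-variation convergence $p(\cdot,t) \to \pi$ as $t \to \infty$ by the standard CTMC convergence theorem.

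Next, I would freeze this $t^{*}$ and let $N \to \infty$. Extending $p_N(\cdot,t^{*})$ by zero outside $\mathbb S_N$ turns it into a probability mass function on all of $\mathbb S$. Theorem~\ref{thm:conv of probs} supplies pointwise convergence $p_N(\vec x,t^{*}) \to p(\vec x,t^{*})$ on $\mathbb S$; since both sides are nonnegative and sum to $1$, Scheff\'e's lemma upgrades this to $\ell^1$ convergence, so $\Vert p(\cdot,t^{*}) - p_N(\cdot,t^{*}) \Vert_1 \to 0$. Choosing $N^{*}$ so that this norm is below $\epsilon/3$ for all $N \ge N^{*}$, the triangle inequality above gives $\Vert \pi - \pi_N\Vert_1 < \epsilon$ whenever $N \ge N^{*}$.

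The main obstacle is the ergodic convergence step $\Vert \pi - p(\cdot,t)\Vert_1 \to 0$, since we are only told $\pi$ exists, not that $p(\cdot,t)$ converges to it. Combining irreducibility, existence of $\pi$, and non-explosiveness gives positive recurrence and hence total-variation convergence, but this deserves an explicit citation or short argument in the write-up. The Scheff\'e step is routine for countable state spaces and only needs a sentence to invoke.
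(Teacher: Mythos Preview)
Your proposal is correct and shares the same three-term triangle-inequality skeleton as the paper's proof. The difference lies in how you control the middle term $\Vert p(\cdot,t^{*}) - p_N(\cdot,t^{*})\Vert_1$: the paper inserts the FSP density $p^{\text{FSP}}_{N-1}$ as an intermediate, splits once more, and then appeals to monotone convergence and the explicit sandwich bounds derived in the proof of Theorem~\ref{thm:conv of probs} to drive each piece to zero; you instead invoke Scheff\'e's lemma to pass directly from the pointwise convergence of Theorem~\ref{thm:conv of probs} to $\ell^1$ convergence. Your route is shorter and avoids reopening the FSP machinery, while the paper's route is more self-contained in that it reuses only inequalities already written down. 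You are also more explicit than the paper about justifying $\Vert p(\cdot,t) - \pi\Vert_1 \to 0$ for the original chain (the paper simply asserts it), and your appeal to irreducibility, non-explosion, and existence of $\pi$ to get positive recurrence and hence total-variation convergence is the right way to fill that gap.
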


% \begin{figure*}[!htb]
% \begin{center}
% \includegraphics[]{figures/MFPTvsN.pdf}
% \end{center}
% \end{figure*}
We now consider convergence of the mean first passage time of $X_N$. Recall, we assumed that both stochastic processes have the same initial state $X(0) = X_N(0) = \vec x_0$ and both state spaces $\mathbb S$ and $\mathbb S_N$ are irreducible. 
Hence, for any $K\subseteq \mathbb S$, each state in $\mathbb S_N$ is accessible to $K$ for sufficiently large $N$. 

% For finiteness of first passage times, we further assume that the state spaces $\mathbb S$ and $\mathbb S_N$ are closed and irreducible for large $N$ so that every state in the state space is accessible from any other states. The irreducibility of the slack reaction system is discussed in Section \ref{sec:acc}.
%In addition to the conditions in Theorem \ref{thm:conv of stationary}, we assume both mean first passage times are finite.

\begin{theorem}\label{thm:conv of MFPT}
For a subset $K$ of the state space of $X$, let $\tau$ and $\tau_N$ be the first passage times to $K$ for $X$ and to $K \cap \mathbb S_N$ for $X_N$, respectively. 
Assume the following conditions,
\begin{enumerate}
\item $X$ admits a stationary distribution $\pi$, and
\item $\dlim_{N\to \infty}\Vert \pi-\pi_N \Vert_1 = 0$.
\end{enumerate} 
Then for any $t\geq 0$,
\begin{align*}
\lim_{N\to\infty} |P(\tau>t)-P(\tau_N >t)|=0.    
\end{align*}
If we further assume that
\begin{enumerate}
    \item[3.] $E(\tau)<\infty$, and
    \item[4.] there exists $g(t)$ such that $P(\tau_N > t) \le g(t)$ for all $N$ and $\int_0^\infty g(t)dt < \infty$,
\end{enumerate}
then 
\begin{align*}%\label{eq:conv of mfpt}
    \lim_{N\to \infty}|E(\tau)-E(\tau_{N})|=0.
\end{align*}
\end{theorem}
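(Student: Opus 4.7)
The plan is to couple $X$ and $X_N$ on a common probability space so that they follow identical trajectories until $X$ first attempts to leave the truncated state space $\mathbb S_N$. Pointwise convergence of the survival functions then reduces to showing that this exit probability vanishes, and the mean first-passage statement follows by dominated convergence applied to the integrated survival functions.

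For the first claim, realize both processes via the Kurtz random-time-change representation driven by a common family of independent unit-rate Poisson processes $\{Y_{\nu \to \nu'}\}_{\nu \to \nu' \in \Re}$. Formula \eqref{eq:slack intensity} shows that $\lambda^N_{\tilde\nu \to \tilde\nu'}(\vec x) = \lambda_{\nu \to \nu'}(\vec x)$ whenever firing $\nu \to \nu'$ from $\vec x$ keeps the resulting state inside $\mathbb S_N$, and $\lambda^N = 0$ otherwise. Setting $\sigma_N := \inf\{t \ge 0 : X(t) \notin \mathbb S_N\}$, the coupled trajectories therefore agree on $[0, \sigma_N)$, and on the event $\{\sigma_N > t\}$ we have $\tau > t \Leftrightarrow \tau_N > t$. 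Consequently
\begin{align*}
|P(\tau > t) - P(\tau_N > t)| \le P(\sigma_N \le t).
\end{align*}
Non-explosivity of $X$ (assumed at the start of the Section) implies that almost surely the path $\{X(s): 0 \le s \le t\}$ visits only finitely many states; since $\mathbb S_N \uparrow \mathbb S$, this finite set eventually lies inside every $\mathbb S_N$, so $\sigma_N \uparrow \infty$ a.s.\ and $P(\sigma_N \le t) \to 0$.

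For the mean-first-passage statement, use $E(\tau) = \int_0^\infty P(\tau > s)\,ds$ together with the analogous identity for $E(\tau_N)$. The first claim supplies pointwise convergence of the integrands, condition 4 provides an integrable dominating envelope $g$, and condition 3 makes the limit finite; dominated convergence then yields $E(\tau_N) \to E(\tau)$.

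The main obstacle is the careful justification of the coupling: one must verify that the slack intensity \eqref{eq:slack intensity} matches the original intensity for every reaction at every state whose proposed product is interior to $\mathbb S_N$, so that the two trajectories really do coincide until the first exit attempt. Conditions 1 and 2 do not appear in this coupling-based argument; they would most naturally enter an alternative, Hart--Tweedie style proof that splits $|P(\tau>t) - P(\tau_N>t)|$ into a finite-truncation piece (handled by Theorem~\ref{thm:conv of probs}) and a tail piece whose uniform smallness in $N$ is obtained from stationarity of $X$ and the $\ell^1$ convergence $\pi_N \to \pi$ supplied by condition 2.
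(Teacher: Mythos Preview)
Your coupling approach to the first claim is genuinely different from the paper's argument and, once patched, more elementary. The paper works with the absorbed processes $\bar X$, $\bar X_N$ and proves a preparatory lemma (using conditions~1 and~2) showing that for any $\epsilon>0$ there is a \emph{finite} set $\bar K$ with $P(\bar X(t)\in\bar K^c)<\epsilon$ and $P(\bar X_N(t)\in\bar K^c)<\epsilon$ for all large $N$; it then splits $|P(\tau>t)-P(\tau_N>t)|=|P(\bar X(t)\in K^c)-P(\bar X_N(t)\in K^c)|$ into a finite sum over $K^c\cap\bar K$ (handled termwise by the analogue of Theorem~\ref{thm:conv of probs} for the absorbed processes) plus a tail bounded by $2\epsilon$. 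Your direct coupling bypasses this machinery entirely and, as you correctly observe, shows that conditions~1 and~2 are not actually needed for the stated conclusion. For the second claim both you and the paper invoke the same dominated-convergence step with envelope $P(\tau>t)+g(t)$.

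One technical point needs tightening. Your reading of \eqref{eq:slack intensity} is not quite right: the indicator there is $\mathbbm{1}_{\{y_i\ge\tilde\nu_{d+i}\}}$, which tests whether the \emph{source} complex has enough slack reactant, not whether the product state lies in $\mathbb S_N$. By the conservation relation $\vec w_i\cdot(\nu'-\nu)=\tilde\nu_{d+i}-\tilde\nu'_{d+i}$ one has $\vec x+\nu'-\nu\in\mathbb S_N$ iff $y_i\ge\tilde\nu_{d+i}-\tilde\nu'_{d+i}$, so whenever $\tilde\nu'_{d+i}>0$ the slack reaction can be blocked even though firing would keep $X$ inside $\mathbb S_N$; this is precisely the ``intrusiveness'' phenomenon of Example~\ref{ex:why intrusive}. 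Consequently the coupled trajectories may diverge strictly before your $\sigma_N$. The fix is easy: let $c=\max_{\tilde\nu\in\widetilde\C}\tilde\nu_{d+1}$ and replace $\sigma_N$ by the first exit of $X$ from $\{\vec x:\vec w\cdot\vec x\le N-c\}$. On that set $\lambda^N\equiv\lambda$ identically (cf.\ Lemma~\ref{lem2}), the coupling is exact, and this exit time still increases to infinity almost surely by non-explosion, so your bound and the rest of the argument go through unchanged.
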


\begin{remark}
To obtain convergence of higher moments of the first passages time, we need only replace  conditions $E(\tau_K)<\infty$ and $\int _0^\infty g(t) dt <\infty$ with 
\begin{align}\label{eq:alter conditions}
E(\tau^n_K)<\infty \quad \text{and} \quad  \int_0^\infty g\left(t^{\frac{1}{n}}\right)dt <\infty, \end{align}
 respectively.
 \end{remark}
We now show that if a Lyapunov function exists for the original system, the conditions in Theorem \ref{thm:conv of stationary} and Theorem \ref{thm:conv of MFPT} hold.
%By using a Lyapunov function, we can show the conditions in Theorem \ref{thm:conv of stationary} and Theorem \ref{thm:conv of MFPT} hold. 
The Lyapunov function approach has been proposed by Meyn and Tweedie \cite{meyn1993stability} and it has been used to study long-term dynamics of Markov processes  \cite{briat2016antithetic, gupta2017finite, anderson2018some, hansen2020existence} especially exponential ergodicity. Gupta et al \cite{gupta2017finite} uses a linear Lyapunov function to show that the stationary distribution of an sFSP model converges to a stationary distribution of the original stochastic model and use the Lyapunov function explicitly compute the convergence rate. In particular, we show Lyapunov functions exist for the examples we consider in Section \ref{sec:practical examples}.
%\textcolor{red}{With the existence of a special Lyapunov function, we also compute the convergence rate of \eqref{eq:conv of mfpt} explicitly.}

\begin{theorem}\label{thm:Lyapunov}
 Suppose there exists a function $V$ and positive constants $C$ and $D$ such that for all $\vec x$,
 \begin{enumerate}
 \item $V(\vec x) \ge 1$ for all $\vec x\in \mathbb S$,
 \item  $V$ is an increasing function in the sense that $$V(\vec x_{N+1}) \ge V(\vec x_{N})$$ for each $\vec x_{N+1}\in \mathbb S_{N+1}\setminus \mathbb S_{N}$ and $\vec x_{N} \in \mathbb S_{N}$, and
\item $\dsum_{\nu\to \nu' \in \Re} \lambda_{\nu\to \nu'}(\vec x) (V(\vec x+\vec \nu'-\vec \nu)-V(\vec x))$
\vspace{-.3cm}
$$\le -C V(\vec x) +D.$$
 \end{enumerate}
Then the conditions in Theorem \ref{thm:conv of MFPT} hold.% and furthermore 

\end{theorem}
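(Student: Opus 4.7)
The plan is to use the three Lyapunov hypotheses to verify the four hypotheses of Theorem \ref{thm:conv of MFPT} one by one, with the bulk of the work being a transfer of the drift inequality from the original process $X$ to each slack process $X_N$.

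\textbf{Step 1 (uniform drift on slack systems).} I would first show that $V$ is a Lyapunov function for every $X_N$ with the \emph{same} constants $C$ and $D$. The slack intensity satisfies $\lambda^N_{\tilde\nu\to\tilde\nu'}(\vec x)=\lambda_{\nu\to\nu'}(\vec x)$ whenever $\vec x+\vec\nu'-\vec\nu\in\mathbb S_N$, and vanishes otherwise; and by construction the $X$-coordinate increment is still $\vec\nu'-\vec\nu$. So the slack generator differs from the original generator only by the removal of the terms
$$\sum_{\substack{\nu\to\nu'\\\vec x+\nu'-\nu\notin\mathbb S_N}}\lambda_{\nu\to\nu'}(\vec x)\bigl(V(\vec x+\nu'-\nu)-V(\vec x)\bigr).$$
Each such reaction carries $\vec x\in\mathbb S_N$ to $\vec x+\nu'-\nu\in\mathbb S_{N+k}\setminus\mathbb S_N$ for some $k\ge 1$, so condition (2) makes each summand non-negative. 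Subtracting non-negative terms from condition (3) preserves the inequality, giving the drift bound $\mathcal L_N V(\vec x)\le -CV(\vec x)+D$ uniformly in $N$.

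\textbf{Step 2 (conditions 1 and 2 of Theorem~\ref{thm:conv of MFPT}).} Condition (3) combined with irreducibility of $\mathbb S$ yields positive recurrence of $X$ by Meyn--Tweedie, giving the stationary distribution $\pi$ and hence condition 1. For condition 2 I would invoke Theorem~\ref{thm:conv of stationary}; what is needed is a function $h(t)\to 0$ independent of $N$ with $\|p_N(\cdot,t)-\pi_N\|_1\le h(t)$. Using Dynkin's formula together with the uniform drift from Step 1 gives $E_{\vec x_0}[V(X_N(t))]\le V(\vec x_0)e^{-Ct}+D/C$, and the same bound holds under $\pi_N$ because $\pi_N V\le D/C$ (by integrating the drift inequality against $\pi_N$). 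Combined with a minorization on a fixed small set $C_0$ — which we may take inside $\mathbb S_{N_0}$ for some $N_0$, so that by monotonicity of $\mathbb S_N$ the set $C_0$ lies in every $\mathbb S_N$ with $N\ge N_0$ and the slack and original dynamics agree on $C_0$ — standard $V$-uniform ergodicity yields the desired uniform $h(t)=Ke^{-\gamma t}V(\vec x_0)$.

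\textbf{Step 3 (condition 3).} Positive recurrence of $X$ (established in Step 2) and irreducibility immediately give $E[\tau]<\infty$ for any non-empty target set $K\subseteq\mathbb S$, by standard countable-state Markov chain theory.

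\textbf{Step 4 (condition 4).} I would combine the uniform moment bound $E_{\vec x_0}[V(X_N(t))]\le V(\vec x_0)e^{-Ct}+D/C$ with the uniform geometric ergodicity of Step 2. Fixing any state $\vec x^*\in K\cap\mathbb S_{N_0}$ (which lies in $K\cap\mathbb S_N$ for every $N\ge N_0$ by monotonicity), the hitting time $\tau_N$ is bounded above by the hitting time to $\{\vec x^*\}$; the uniform drift and uniform minorization give an exponential tail $P(\tau_N>t)\le g(t)=Ae^{-\beta t}V(\vec x_0)$ for constants $A,\beta>0$ independent of $N$, which is clearly integrable.

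The main obstacle is Step 2 — specifically, upgrading the uniform drift inequality to a uniform quantitative rate of convergence in total variation. This requires a minorization condition with constants independent of $N$. The resolution is to observe that the slack dynamics coincide with the original dynamics on any fixed finite subset $C_0\subset\mathbb S_{N_0}$, so the minorization constant of $X$ on $C_0$ (which exists by irreducibility) can be transferred directly to every $X_N$ with $N\ge N_0$. Once that uniform minorization is secured, the standard Meyn--Tweedie $V$-uniform ergodicity theorem delivers the required $h(t)$ and $g(t)$ simultaneously.
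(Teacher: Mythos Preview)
Your proposal is correct and shares the paper's backbone: Step~1 is exactly the paper's key computation, and Step~2 invokes Theorem~\ref{thm:conv of stationary} just as the paper does. Two differences are worth flagging. First, you explicitly secure a minorization uniform in $N$ by fixing a small set $C_0\subset\mathbb S_{N_0}$ on which the slack and original dynamics agree; the paper instead simply asserts that the Meyn--Tweedie constants $\beta,\eta$ depend only on $C$ and $D$, which tacitly assumes such uniformity. Your treatment is more honest here (though note that for the dynamics to truly agree on $C_0$ you need all one-step transitions out of $C_0$ to land in $\mathbb S_N$, not merely $C_0\subset\mathbb S_N$---a trivial fix by enlarging $N_0$). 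Second, for conditions~3 and~4 the paper takes a different route: it passes to the absorbing processes $\bar X$ and $\bar X_N$ (stopped upon hitting $K$), builds a modified Lyapunov function $\bar V$ equal to $V$ off $K$ and $1$ on $K$, verifies the drift inequality for $\bar V$ under $\bar X$ and $\bar X_N$, and then reads off $E(\tau)<\infty$ and the uniform tail $g(t)$ from exponential ergodicity of the absorbing chains (using $\bar\pi(K^c)=0$). Your Steps~3 and~4 reach the same conclusions via positive recurrence and a direct hitting-time tail bound, which is arguably cleaner; the paper's absorbing-chain device has the advantage of recycling a single mechanism for both the original and slack systems and for both conditions.
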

\begin{remark}
The conditions \eqref{eq:alter conditions} hold if a Lyapunov function satisfying the conditions in Theorem \ref{thm:Lyapunov} exists. Thus, the convergence of the higher moments of the first passage time also follows. 
\end{remark}

\section{Inheritance of slack networks}\label{sec:acc}
As we showed in Section \ref{subsec:compare to buffer},  not all state space truncations preserve accessibility of states in the original system. (For the example in Section \ref{subsec:compare to buffer}, the truncation created a new absorbing state.) 
Thus, it is desirable to obtain reduced models that are guaranteed to maintain the accessibility of the original system to predetermined target states.
In this Section, we show that under mild conditions, both a regular slack system and an optimized slack system preserve the original accessibility. The proofs of the theorems introduced in this section are in Appendix \ref{app:proofs2}. The key to these results is the condition of weak reversibility.

\begin{definition}
A reaction network is \textbf{weakly reversible} if each connected component of the network is strongly connected. That is,
if there is a path of reactions from a complex $\nu$ to $\nu'$, then there is a path of reactions $\nu'$ to $\nu$.
%\end{align*} 
\end{definition}
\textcolor{black}{We note that the weakly reversible condition applies to the network graph of the CRN. The network graph consists of complexes (nodes) and reactions (edges). 
It is a sufficient condition for irreducibility of the associated mass-action stochastic process.
Indeed, the sufficiency of weak reversibility holds even under general kinetics as long as condition \eqref{eq:standard intensity} is satisfied.\cite{pauleve2014dynamical} Hence irreducibility of a regular slack network follows since it preserves weak reversibility of the original network and the kinetics modeling the regular slack system satisfies \eqref{eq:standard intensity}.}

\begin{cor}\label{cor:irreducibility}
Let $(\S,\C,\Re,\Lambda)$ be a weakly reversible CRN with intensity functions $\Lambda=\{\lambda_{y \to y'}\}$ satisfying \eqref{eq:standard intensity}. Then the state space of the associated stochastic process with a regular slack network $(\widetilde \S,\widetilde \C,\widetilde \Re, \Lambda^N)$ is a union of closed communication classes for any $N$.
\end{cor}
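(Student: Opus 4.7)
The plan is to reduce this corollary to the theorem of Paulev\'e et al.\ recalled just above the statement: for any weakly reversible CRN whose intensities satisfy \eqref{eq:standard intensity}, every communication class of the associated stochastic process is closed. Accordingly, I need to verify two things for the regular slack system $(\widetilde \S, \widetilde \C, \widetilde \Re, \Lambda^N)$: first, that it is weakly reversible; and second, that its intensities $\lambda^N_{\tilde \nu \to \tilde \nu'}$ satisfy the standard positivity condition \eqref{eq:standard intensity} on the augmented state space.

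For weak reversibility, I would use the observation emphasized after equation \eqref{eq:new CRN}: the slack construction reuses the connectivity matrix $S$ of the original network unchanged and simply replaces each complex $\nu$ by $\tilde \nu=(\nu,\,\vec u - W\nu)$, whose first $d$ coordinates are still $\nu$. Hence the assignment $\nu \mapsto \tilde \nu$ is injective on $\C$, distinct original complexes yield distinct slack complexes, and the directed graph on $\widetilde \C$ induced by $\widetilde \Re$ is isomorphic as a labeled digraph to the one on $\C$ induced by $\Re$. Since weak reversibility is a property of that digraph alone (each weakly connected component is strongly connected), it transfers immediately from the original network to the slack network. For the standard-intensity condition, the slack intensity \eqref{eq:slack intensity} factorizes as $\lambda^N_{\tilde \nu \to \tilde \nu'}(\vec x)=\lambda_{\nu\to \nu'}(\vec x)\prod_{i=1}^m \mathbbm{1}_{\{y_i\ge \tilde \nu_{d+i}\}}$ with $y_i=N_i-\sum_j w_{ij} x_j$. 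By hypothesis the first factor is positive iff $x_j\ge \nu_j$ for all $j\le d$, and the indicator factors are positive iff $y_i\ge \tilde \nu_{d+i}$ for each $i$. Combining, the product is positive iff $\tilde x_k\ge \tilde \nu_k$ for every coordinate $k$ of the augmented state, which is exactly \eqref{eq:standard intensity} transcribed to the slack network.

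With both conditions verified, the cited Paulev\'e et al.\ result gives that every communication class of $X_N$ is closed; since $\mathbb S_N$ is always the disjoint union of its communication classes, it is a union of closed communication classes, which is the conclusion of the corollary. The only real obstacle is the bookkeeping check that the slack algorithm really induces an isomorphism of complex digraphs, and this follows directly from the fact that the first $d$ coordinates of $\tilde \nu$ retain the original complex $\nu$; once that observation is recorded, the proof reduces to an invocation of the already-cited theorem.
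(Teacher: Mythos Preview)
Your proposal is correct and follows essentially the same approach as the paper: the paper's justification is the paragraph immediately preceding the corollary, which asserts precisely that the regular slack network preserves weak reversibility and that the slack intensities satisfy \eqref{eq:standard intensity}, then invokes Paulev\'e et al. You have simply supplied the explicit verification of those two assertions (injectivity of $\nu\mapsto\tilde\nu$ via the retained first $d$ coordinates, and the factorization of \eqref{eq:slack intensity} matching \eqref{eq:standard intensity} on the augmented state), which the paper leaves implicit.
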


In case the original network is not weakly reversible, we can still guarantee that optimized slack systems have the same accessibility as the original system provided all species have a degradation reaction ($S_i \to \emptyset$).
\begin{theorem}\label{theorem:all out flows}
Let $(\S,\C,\Re,\Lambda)$ be a
reaction network such that $\{S_i \to \emptyset : S_i \in \S\} \subset \Re$. Suppose that the stochastic process $X$ associated with $(\S,\C,\Re,\Lambda)$ and beginning at the point $\vec x_0$ is irreducible. 
%For any subset $K$ and $X(0)=x_0$, suppose that $X$ reaches $K$ almost surely, i.e. $P(\tau_K <\infty)=1$. 
Let $X_N$ be the stochastic process associated with an optimized slack network $(\widetilde \S,\widetilde \C,\widetilde \Re, \Lambda^N)$ such that $X_N(0)=\vec x_0$ for every $N$ large enough. Then, for any subset $K$ of the state space of $X$, there exists $N_0$ such that $X_N$ reaches $K$ almost surely for $N\ge N_0$.
\end{theorem}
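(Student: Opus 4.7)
The plan is to exploit the assumption that every species $S_i$ admits a degradation reaction $S_i \to \emptyset$ in the original network. First, I would trace the slack construction of Section~\ref{subsec:algo} through each such reaction. Since the complex $\emptyset$ has $(WC)_\emptyset = 0$ while the complex $S_i$ has $(WC)_{S_i} = w_i$, any admissible choice of $u$ (which must dominate the row of $WC$ so that $D = U - WC$ is nonnegative) satisfies $u \ge w_i$, and the algorithm produces the slack reaction $S_i + (u-w_i)Y \to uY$. Canceling the common $(u-w_i)$ copies of $Y$ in the optimization step leaves the reaction $S_i \to w_i Y$ in the optimized slack network. Because no $Y$ appears on the reactant side, the intensity formula \eqref{eq:slack intensity} reduces to $\lambda_{S_i \to \emptyset}(\vec x)$, which is positive as soon as $x_i \ge 1$ regardless of the slack count $y$. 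Thus, in the optimized slack system, every degradation reaction is free to fire whenever the corresponding species is present.

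Next I would use the degradations to connect everything to the origin. From any state $\vec x \in \mathbb S_N$, firing the reactions $S_i \to w_i Y$ sequentially realizes a positive-probability path in $X_N$ to $\vec 0$. Now fix $\vec k \in K$; since $X$ is irreducible from $\vec x_0$ and the degradations already show $\vec 0$ lies in the state space of $X$, there is a finite path of reactions in $\Re$ of the form $\vec 0 = \vec z^{(0)} \to \vec z^{(1)} \to \cdots \to \vec z^{(m)} = \vec k$. Let $M = \max_i \vec w \cdot \vec z^{(i)}$ and let $c_{\max}$ be the largest slack coefficient on the reactant side of the optimized slack versions of the reactions used along this path. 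For any $N \ge N_0 := M + c_{\max}$, the slack count $y = N - \vec w \cdot \vec z^{(i)}$ is at least $c_{\max}$ at every intermediate state, so each lifted reaction has strictly positive intensity in $X_N$. This gives a positive-probability transition of $X_N$ from $\vec 0$ to $\vec k$.

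Combining the two observations, for $N \ge N_0$ every state in $\mathbb S_N$ reachable by $X_N$ from $\vec x_0$ admits a positive-probability path to $K$: first degrade to $\vec 0$, then follow the lifted path to $\vec k$. Since $X_N$ is a continuous-time Markov chain on a finite state space, this is enough to conclude $P_{\vec x_0}(\tau_K < \infty) = 1$. Otherwise $X_N$ would eventually be absorbed in a closed communicating class disjoint from $K$, but by the previous sentence every state in such a class has a positive-probability escape to $K$, a contradiction.

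The main obstacle I anticipate is formalizing the optimization step. The paper motivates ``canceling redundant slack reactants'' primarily through examples, so the written proof needs a precise statement that the optimized slack network, applied to any degradation reaction $S_i \to \emptyset$, always yields a reaction whose reactant complex contains no slack reactant. Once this reduction is justified rigorously, the accessibility argument above is a routine reachability check, and the absence of $Y$ from the reactant side of each degradation is exactly what prevents accessibility from being destroyed at the boundary $\vec w \cdot \vec x = N$ of $\mathbb S_N$.
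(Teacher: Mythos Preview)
Your proposal is correct and follows essentially the same approach as the paper: both route every state down to $\vec 0$ via the always-available degradations in the optimized slack network, and then lift a fixed finite path from $\vec 0$ into $K$ once $N$ is large enough that the slack count exceeds the maximal slack stoichiometry along that path (this is exactly the content of Lemma~\ref{lem2} in the paper). The only cosmetic difference is that the paper packages the conclusion as a proof by contradiction that the communicating class of $\vec x_0$ in $X_N$ is closed, whereas you argue accessibility to $K$ directly; your formulation is arguably cleaner, and your observation that the optimized degradation is $S_i \to w_i Y$ rather than $S_i \to Y$ is in fact more precise than the paper's phrasing when $\vec w$ has entries larger than $1$.
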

This theorem follows from the fact that a slack system only differs from the original system when its runs out of slack reactants. However, in an optimized slack system, degradation reactions are allowable with no slack reactants. Hence, our proof of Theorem \ref{theorem:all out flows} relies on the presence of all degradation reactions. 

%\textcolor{red}{We note in the statement of the previous Theorem, the state $K$ should include the absorbing state.} \textcolor{blue}{(We should add we need $N$ to be large enough so that we contain the absorbing states?)}

\textcolor{black}{A slack network may also inherit its stationary distribution from the original reaction system. When the original system admits a stationary distribution of a product form of Poissons under the \emph{complex balance} condition, a slack system inherits the same form of the stationary distribution as well.  
A reaction system is complex balanced if the associated deterministic mass-action system admits a steady state $c^*$, such that
\begin{align*}
    \sum_{\substack{\nu\in \C \\\nu \to \nu'\in \Re}}f_\nu(\vec c^*)= \sum_{\substack{\nu' \in \C \\\nu \to \nu'\in \Re}}f_{\nu'}(\vec c^*),
    \end{align*}
    where $f_\nu(x)=x_1^{\nu_1}\cdots x_d^{\nu_d}$ is the deterministic mass-action rate \cite{feinberg1972complex}.
If a reaction system is complex balanced, then its associated stochastic mass-action system admits a stationary distribution corresponding to a product of Poisson distributions centered at the complex balance steady state.\cite{anderson2010product}. The following lemma show that the complex balancing of the original network is inherited by a regular slack network.}
%An analog for stochastic counterparts was also proposed as a mass-action stochastic system associated with a weakly reversible reaction network of zero deficiency admits a stationary distribution of a product form of Poissons, and its means is a complex balance steady state of the deterministic counterpart.

% We note that a regular slack network and original network have the same number of complexes $n$, and the same connectivity matrix $S$, which implies they have the same number of connected components $\ell$. Further, their stoichiometry matrices $\Gamma=CS$ and $$\tilde \Gamma = \begin{pmatrix} C \\ U-AC\end{pmatrix} S = \begin{pmatrix} CS \\ -ACS \end{pmatrix}$$ have the same rank $s$. Together, these imply that the original network and its regular slack network have the same deficiency. 

% 
\begin{lem}\label{lem:cb}
Suppose that $(\S,\C,\Re,\Lambda)$ is a reaction network whose mass-action deterministic model admits a complex balanced steady state $c^*$. Then any regular slack network $(\widetilde \S,\widetilde \C,\widetilde \Re,\Lambda^N)$ with slack reactants $Y_1,\cdots, Y_m$ also admits a complex balanced steady state at $\tilde c=(c^*,1,1,\dots,1)^\top$.% if $N$ is large enough.
\end{lem}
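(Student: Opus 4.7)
The plan is to exploit two structural facts about the regular slack network constructed in Section~\ref{subsec:algo}: it shares the connectivity matrix $S$ with the original network, so there is a canonical bijection $\C \leftrightarrow \widetilde\C$ (via $\nu \leftrightarrow \tilde\nu$) and a corresponding bijection $\Re \leftrightarrow \widetilde\Re$ with identical rate constants on matched reactions; and by construction \eqref{eq:D}, each augmented complex $\tilde\nu$ differs from $\nu$ only in its slack-reactant coordinates $D_{j,\nu}$. The strategy I would follow is to verify the complex-balance identity at $\tilde c = (c^*, 1, \ldots, 1)$ by reducing every inflow/outflow sum at a complex of the slack network to the corresponding sum in the original network evaluated at $c^*$.

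First, I would evaluate the mass-action monomial $\tilde f_{\tilde\nu}$ for an arbitrary slack complex $\tilde\nu$ at the candidate steady state $\tilde c$. Because every slack-reactant coordinate of $\tilde c$ equals $1$, the factors $1^{D_{j,\nu}}$ drop out and
\[
\tilde f_{\tilde\nu}(\tilde c) \;=\; \prod_{i=1}^d (c^*_i)^{\nu_i}\,\prod_{j=1}^m 1^{D_{j,\nu}} \;=\; f_\nu(c^*).
\]
This identity is the main lever of the proof: it erases all dependence on the augmented matrix $D$ produced by the slack algorithm, so the mass-action ``weight'' carried by $\tilde\nu$ in the slack network at $\tilde c$ is the same as the weight carried by $\nu$ in the original network at $c^*$.

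Next, I would fix an arbitrary $\tilde\eta \in \widetilde\C$ and examine the candidate complex-balance condition at $\tilde c$:
\[
\sum_{\tilde\nu \to \tilde\eta \in \widetilde\Re} \kappa_{\tilde\nu \to \tilde\eta}\,\tilde f_{\tilde\nu}(\tilde c) \;=\; \sum_{\tilde\eta \to \tilde\nu' \in \widetilde\Re} \kappa_{\tilde\eta \to \tilde\nu'}\,\tilde f_{\tilde\eta}(\tilde c).
\]
Applying the monomial identity to each term and using the reaction bijection to relabel the indices, both sides collapse to the corresponding inflow and outflow sums at $\eta$ in the original network evaluated at $c^*$. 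The hypothesized complex-balance of the original network at $c^*$ then closes the argument for every $\tilde\eta$.

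I do not anticipate a substantive obstacle; the lemma is essentially a direct computation riding on the structural compatibility built into the slack construction. The one conceptual subtlety worth flagging is that complex balance is a property of the deterministic mass-action system associated with the reaction graph, whereas the slack system carries the modified kinetics $\Lambda^N$ (zero-order in the slack reactants) for its stochastic dynamics. At the candidate point $\tilde c$, however, all slack coordinates equal $1$, so the two kinetic prescriptions agree on the values $\tilde f_{\tilde\nu}(\tilde c)$ and the modification is invisible to the complex-balance check.
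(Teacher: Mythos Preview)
Your proof is correct and rests on the same key observation as the paper's: setting every slack coordinate to $1$ makes the mass-action monomial $\tilde f_{\tilde\nu}(\tilde c)$ equal $f_\nu(c^*)$, so the complex-balance identities at corresponding complexes coincide. The paper phrases this as ``fixing $y_i\equiv 1$ reduces the slack ODE to the original ODE'' and then invokes Feinberg's result that a single complex-balanced steady state forces all positive steady states to be complex balanced, whereas you verify the complex-balance identity directly at each $\tilde\eta$; your route is slightly more explicit but the approach is essentially the same.
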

\begin{remark}
\textcolor{black}{Note that a regular slack network also preserves the deficiency of the original network. Deficiency $\delta$ of a reaction network is an index such that
\begin{align*}
    \delta=n-\ell-s,
\end{align*}
where $n$ is the number of the complexes, $\ell$ is the number of connected components and $s$ is the rank of the stoichiometry matrix of the reaction network.
Deficiency characterizes the connectivity of the network structure, and surprisingly it can also determine the long-term behavior of the system dynamics regardless of the system parameters.\cite{feinberg1972complex,horn1972necessary,anderson2010product} A regular slack network and original network have the same number of complexes $n$, and the same connectivity matrix $S$, which implies they have the same number of connected components $\ell$. Furthermore, using the notation from Section \ref{subsec:algo}, the stoichiometry matrices are $\Gamma=CS$ for the original network and $$\widetilde \Gamma = \begin{pmatrix} C \\ U-AC\end{pmatrix} S = \begin{pmatrix} CS \\ -WCS \end{pmatrix}$$ for a slack network, which means that they have the same rank $s$. Together, these imply that the original network and its regular slack network have the same deficiency. }
\end{remark}

Since the complex balancing is inherited with the same steady state values for $X_i$, we have the following stochastic analog of inheritance of the Poissonian stationary distribution for regular slack systems.
\begin{theorem}\label{theorem:relation between two}
Let $X$ be the stochastic mass-action system associated with a complex balanced $(\S,\C,\Re,\Lambda)$ with an initial condition $X(0)=\vec x_0$. Let $X_N$ be the stochastic system associated with a regular slack system $(\widetilde \S,\widetilde \C,\widetilde \Re,\Lambda^N)$ with $X_N(0)=\vec x_0$. 
Then, for the state space $\mathbb S_N$ of $X_N$, there exists a constant $M_N>0$ such that
\begin{align*}
\pi_N(\vec x)=M_N\pi(\vec x)\quad \text{for $\vec x\in \mathbb{S}_N$,}
\end{align*}
where $\pi$ and $\pi_N$ are the stationary distributions of $X$ and $X_N$, respectively.
\end{theorem}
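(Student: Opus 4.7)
The plan is to verify directly that the candidate $\pi_N(\vec x):=M_N\,\pi(\vec x)$, restricted to $\mathbb S_N$ and extended by zero, satisfies the stationary master equation of the slack Markov chain $X_N$; uniqueness of the stationary distribution on the finite irreducible chain $\mathbb S_N$ will then force equality. Irreducibility of $X_N$ on $\mathbb S_N$ is free from Corollary \ref{cor:irreducibility} together with the classical fact that a complex balanced reaction network is weakly reversible. The role of Lemma \ref{lem:cb} is to ensure that the deterministic complex-balance equations at $\tilde c=(c^*,1,\dots,1)$ hold for the slack network; since each slack coordinate of $\tilde c$ equals $1$, these equations reduce verbatim to the original complex-balance equations at $c^*$, which is exactly what the computation below will need.

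The approach adapts the Anderson--Craciun--Kurtz (ACK) derivation of the Poisson stationary distribution to the indicator-modified slack kinetics. First, I would record the mass-action identity
\begin{equation*}
\lambda_{\nu\to\nu'}(\vec x)\,\pi(\vec x) \;=\; \kappa_{\nu\to\nu'}(c^*)^{\nu}\,\tilde\pi(\vec x-\vec\nu),\qquad \tilde\pi(\vec z):=\prod_{i=1}^d\frac{(c^*_i)^{z_i}}{z_i!},
\end{equation*}
with $\tilde\pi$ understood to vanish whenever any coordinate is negative. Second, using $\vec y(\vec x)=\vec N-W\vec x$ and $\vec d_\eta=\vec u-W\vec\eta$ from \eqref{eq:D}, I would derive the key indicator identity
\begin{equation*}
\mathbbm{1}_{\{\vec y(\vec x-\vec\nu'+\vec\nu)\,\succeq\,\vec d_{\nu}\}}\;=\;\mathbbm{1}_{\{\vec y(\vec x)\,\succeq\,\vec d_{\nu'}\}},
\end{equation*}
which is the one-line computation $\vec y(\vec x-\vec\nu'+\vec\nu)-\vec d_\nu=\vec y(\vec x)-\vec d_{\nu'}$. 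In words: the indicator gating the inflow to $\vec x$ via $\nu\to\nu'$ depends only on the \emph{target} complex $\nu'$, while the indicator gating the outflow from $\vec x$ via the same reaction depends only on the \emph{source} complex $\nu$. Moreover, because $\vec u$ is chosen so that $\vec d_\eta\succeq\vec 0$, the indicator being $1$ already forces the relevant pre-image state into $\mathbb S_N$, so $\pi_N$ and $M_N\pi$ agree on both sides of the balance.

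Substituting these two identities into the stationary balance, dividing by $M_N$, and regrouping terms by complex $\eta\in\widetilde{\mathcal C}$ reduces the equation to
\begin{equation*}
\sum_{\eta}\mathbbm{1}_{\{\vec y(\vec x)\,\succeq\,\vec d_\eta\}}\,\tilde\pi(\vec x-\vec\eta)\!\left[\sum_{\nu\to\eta}\kappa_{\nu\to\eta}(c^*)^{\nu}-\sum_{\eta\to\nu'}\kappa_{\eta\to\nu'}(c^*)^{\eta}\right]=0,
\end{equation*}
and every bracketed factor vanishes by the original complex-balance relations at $c^*$. Uniqueness on the finite irreducible chain then yields $\pi_N\equiv M_N\pi$ on $\mathbb S_N$, with $M_N=\bigl(\sum_{\vec x\in\mathbb S_N}\pi(\vec x)\bigr)^{-1}$.

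The main obstacle I anticipate is the bookkeeping at boundary states, where ``missing'' slack reactions could in principle spoil the complex-by-complex cancellation that is the heart of ACK. The indicator identity above is precisely what rescues the argument: it shows the truncation removes reactions in a complex-symmetric fashion, so that at every $\vec x\in\mathbb S_N$ the surviving inflows into, and outflows from, a given complex $\eta$ are gated by the \emph{same} factor $\mathbbm{1}_{\{\vec y(\vec x)\succeq\vec d_\eta\}}$. That common factor can be pulled outside the ACK cancellation, and complex balance at $c^*$ then kills each complex's contribution despite the boundary truncation.
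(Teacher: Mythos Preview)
Your proposal is correct and follows essentially the same route as the paper. The paper's proof invokes the ``stochastic complex balance'' identity for $\pi$ from Anderson--Craciun--Kurtz and then uses the conservation relation $\vec w_i\cdot(\nu'-\nu)+\tilde\nu'_{d+i}-\tilde\nu_{d+i}=0$ to obtain exactly your indicator identity $\mathbbm{1}_{\{\vec y(\vec x-\vec\nu'+\vec\nu)\succeq\vec d_\nu\}}=\mathbbm{1}_{\{\vec y(\vec x)\succeq\vec d_{\nu'}\}}$, so that inflow and outflow at a fixed complex $\tilde\nu^*$ carry the same gating factor and cancel complex-by-complex; you simply unpack the ACK identity $\lambda_{\nu\to\nu'}(\vec x)\pi(\vec x)=\kappa_{\nu\to\nu'}(c^*)^{\nu}\tilde\pi(\vec x-\vec\nu)$ explicitly rather than citing it, but the argument is the same.
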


We demonstrate Lemma \ref{lem:cb} and Theorem \ref{theorem:relation between two} with a simple example.
\begin{example}
Consider two networks,
\begin{align}
&X \xrightleftharpoons[2]{1} 0, \label{ex:original}\\ %\quad 2X_1 \xrightleftharpoons[1]{1} X_2\\
&X \xrightleftharpoons[2]{1} Y. \label{ex:slack} %\quad 2X_1 \xrightleftharpoons[1]{1} X_2
\end{align}
Let $X$ and $X_N$ be systems \eqref{ex:original} and \eqref{ex:slack}, respectively, where $N$ is the conservation quantity $X_{N}(t)\le N$.
Under mass-action kinetics, the complex balance steady state of 
\eqref{ex:original} is $c^*=2$. Under mass-action kinetics, $\tilde c=(2,1)$ is a complex balance steady state of \eqref{ex:slack}.

Now let $\pi$ and $\pi_N$ be the stationary distribution of $X$ and $X_N$, respectively. 
By Theorem 6.4, Anderson et al \cite{anderson2010product}, $\pi$ is a product form of Poissons such that  
\begin{align*}
\pi(x)=e^{-2}\frac{{c^*}^x}{x!} \quad \text{for each state $x$}.
\end{align*}
By plugging $\pi$ into the chemical master equation \ref{eq:master} of $X_N$ and showing that for each $\vec x$
\begin{align*}
    &\lambda^N_{X\to Y}(\vec x+1)\pi(\vec x+1)+\lambda^N_{Y\to X}(\vec x-1)\pi(\vec x-1)\\
    &=(x+1)\mathbbm{1}_{\{N-x-1\ge 0\}}e^{-2}\frac{2^{x+1}}{(x+1)!}+2\mathbbm{1}_{\{N-x+1\ge 1\}}e^{-2}\frac{2^{x-1}}{(x-1)!}\\
    &=x\mathbbm{1}_{\{N-x\ge 0\}}e^{-2}\frac{2^x}{x!}+2\mathbbm{1}_{\{N-x\ge 1\}}e^{-2}\frac{2^x}{x!}\\
    &=\lambda^N_{X\to Y}(\vec x)\pi(\vec x)+\lambda^N_{Y\to X}(\vec x)\pi(\vec x)
\end{align*}
we can verify that $\pi$ is a stationary solution of the chemical master equation \ref{eq:master} of $X_N$.
Since the state space of $X_N$ is $\{x \in \mathbb Z_{\ge 0} : x \le N\}$, we choose a constant $M_N$ such that
\begin{align*}
    \sum_{0\le x\le N}M_N\pi(x)=1.
\end{align*}
Then $\pi_N=M_N\pi$ is the stationary distribution of $X_N$.
\end{example}

\begin{figure*}[!hbt]
    \centering
    \includegraphics[width = 0.9 \textwidth]{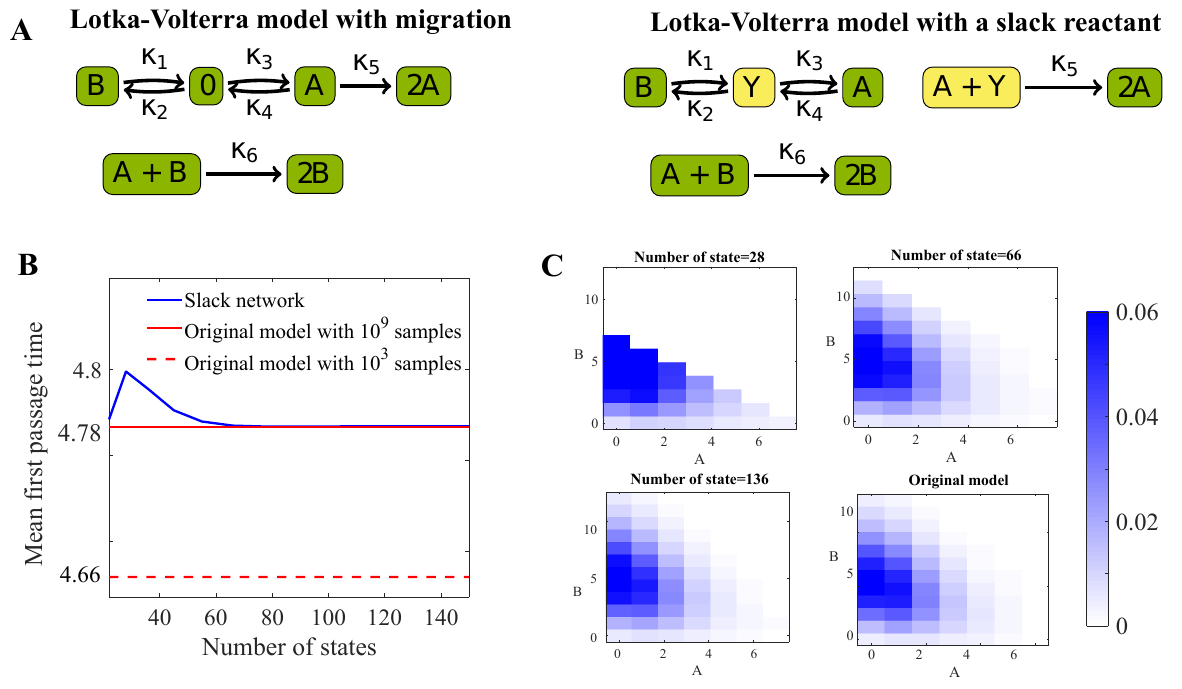}
    \caption{\textbf{Calculating Mean Time to Extinction of a Lotka-Volterra Model with Migration Using Slack Reactants.} A. The reaction network for the Lotka-Volterra model with migration (left).  The Lotka-Volterra model with a slack reactant $Y$ (right). The parameters are $\kappa_1=0.1, \kappa_2=0.1, \kappa_3=0.2, \kappa_4=0.6, \kappa_5=0.2$ and $\kappa_6=0.2$. B. Convergence of the mean time to extinction of the slack network (blue) to the true network (solid red). C. Heatmaps of the probability density at time $1000$ of the original model and the slack system with various truncation size. } %\textcolor{blue}{Make the heading above the subplots bigger.} 
    \label{fig:GLV}
\end{figure*}

\section{Applications of Slack Networks}\label{sec:practical examples}

In this Section, we demonstrate the utility of slack reactants in computing mean first-passage times for two biological examples. For both examples, we compute the mean first passage time via the matrix inversion approach as shown in Appendix \ref{app:matrix inversion}.

\subsection{A Lotka-Volterra model with migration}

%In this section, 
%By using the method of slack reactants, we approximate the first extinction time of a stochastic prey-predator model. 
Consider a Lotka-Volterra model with migration shown in Figure \ref{fig:GLV}A. 
In this model, species $A$ is the prey, and species $B$ is the predator. Clearly, the state space this model is infinite $(A,B)$ such that $A \geq 0$, $B \geq 0$. We will use slack reactants to determine the expected time to extinction of either species. More specifically, let $K = \{(A,B): A = 0 \text{ or } B = 0\}$. We will calculate the mean first arrival time to $K$ from an initial condition $(A(0),B(0))$. (In our simulations in Figure \ref{fig:GLV}, we chose $(A(0),B(0)) = (3,3)$.)

To generate our slack network, we choose a conservative bound $w \cdot (A,B)^\top \le N$ with $w=(1,1)$. As we discussed in Section \ref{subsec:properties of slack}, this $w$ minimizes the intrusiveness of slack reactants because the number of reactions $\nu\to \nu'$ such that $(\nu'-\nu)\cdot w >0$ is minimized. By using the algorithm introduced in Section \ref{subsec:algo}, we generate a regular slack network \eqref{eq:regular slack} with a slack reactant $Y$,
\begin{align}
\begin{split}\label{eq:regular slack}
    &B+Y \xrightleftharpoons[\kappa_2]{\kappa_1} 2Y \xrightleftharpoons[\kappa_4]{\kappa_3} A+Y \xrightarrow{\kappa_5} 2A, \\
    &A+B\xrightarrow{\kappa_6} 2B.
    \end{split}
\end{align}
As the slack reactant $Y$ in reactions $B+Y \rightleftharpoons 2Y \rightleftharpoons A+Y$ can be canceled, we further generate the optimized slack network shown in Figure \ref{fig:GLV}A. We let $A(0)+B(0)+Y(0)=N$, which is the conservation quantity of the new network.

Let $\tau$ be the first passage time from our initial condition to $K$. First, we examine the accessibility of the set $K$. Because our reaction network contains creation and destruction of all species (i.e., $B\rightleftharpoons \emptyset \rightleftharpoons A$) the original model is irreducible and any state is accessible to $K$.
%To estimate the first passage time, it is important to check the accessibility to $K$ because $\tau$ could be infinite if a process is absorbed at another state before it hits $K$.
Furthermore, Theorem \ref{theorem:all out flows} guarantees that the stochastic model associated with the optimized slack network is also accessible to $K$ from any state.  

Next, by showing there exists a Lyapunov function satisfying the condition of Theorem \ref{thm:Lyapunov} for the original model, we are guaranteed the first passages times from our slack network will converge to the true first passage times. (See Appendix \ref{sec:LyapunovLotkaVoltera} for more detail.)  Therefore, as the plot shows in Figure \ref{fig:GLV}B, the mean first extinction time of the slack network converges to that of the original model, as $N$ increases. \textcolor{black}{The mean first passage time of the original model was obtained by averaging $10^9$ sample trajectories. These trajectories were computed serially on a commodity machine and took $4.6$ hours to run. In contrast, the mean first passage times of the slack systems were computed analytically on the same computer and took at most 13 seconds. Figure \ref{fig:GLV} also shows that using only $10^3$ samples is misleading as the simulation average has not yet converged to the true mean first passage time.} Finally, as expected from Theorem \ref{thm:conv of probs}
the probability density of the slack network converges to that of the original network (see Figure~\ref{fig:GLV}C).

\subsection{Protein synthesis with slow toggle switch}
We now consider a protein synthesis model with a toggle switch, see Figure~\ref{fig:Toggle}A. Protein species $X$ and $Z$ may be created, but only when their respective genes $D^X$ or $D^Z$ are in the active (unoccupied) state, $D^X_0$ and $D^Z_0$. Each protein acts as a repressor for the other by binding at the promoter of the opposite gene and forcing it into the inactive (occupied) state ($D^X_1$ and $D^Z_1$). In this system we consider only one copy of each gene, so that, $D^X_0 + D^X_1 = D^Z_0 + D^Z_1 = 1$ for all time. Thus, we focus primarily on the state space of protein numbers only $(X,Z)$.

The deterministic form of such systems are often referred to as a ``bi-stable switch'' as it is characterized by steady states $(X^*,0)$ and ($X$ ``on'' and $Z$ ``off'') and $(0,Z^*)$ ($X$ ``off'' and $Z$ ``on'').  This stochastic form of toggle switch has been shown to exhibit a multi-modal long-term probability density due to 
%the significant kinetic change of synthesis/degradation 
switches between these two deterministic states due to rapid significant changes in the numbers of proteins $X$ and $Z$ by synthesis or degradation (depending on the  state of promoters) \cite{al2019multi}.  Figure \ref{fig:Toggle}C shows that the associated stochastic system admits a \emph{tri-modal} long-term probability density. Thus the system transitions from one mode to other modes and, for the kinetic parameters chosen in Figure~\ref{fig:Toggle}, rarely leaves the region $R = \{ (X,Z) | 0 \leq X \leq 30 \text{ or }  0 \leq Z \leq 30\}$.
Significant departures from a stable region of a genetic switch may be associated with irregular and diseased cell fates. As such, the first passage time of this system outside of $R$ 
%outside rarely visited states can be associated with irregular cell fate such as cancer or cell disease [Cite]. Hence by measuring the first passage time for the 
may indicate the appearance of an unexpected phenotype in a population. 
%system to escape the region $[0,30]^2$ we can predict how likely the system exhibits unexpected phenotype or genotype.
Because this event is rare, estimating first passage times with direct stochastic simulations, such as with the Gillespie algorithm \cite{gillespie2007stochastic}, will be complicated by the long time taken to exit the region.

As in the previous example, slack systems provide a valuable tool for direct calculation of mean first passage times. In this example, we consider the time a trajectory beginning at state $(X,Z,D^X_0,D^Z_0)=(0,0,1,1)$ enters the target set $K = \{ (X,Z)| X > 30 \text{ and } Z > 30 \} = R^c$ and compute $\tau$, the first passage time to $K$.  

Since the species corresponding to the status of promoters ($D^X_0,D^X_1,D^Z_0$ and $D^Z_1$) are already bounded, we use the conservative bound $X+Z\le N$ to generate a regular slack network in Figure \ref{fig:Toggle}B with the algorithm introduced in Section \ref{subsec:algo}. The original toggle switch model is irreducible (because of the degradation $X\to 0$, $Z\to 0$ and protein synthesis $Z+D^X_0\leftarrow D^X_1$, $X+D^Z_0\leftarrow D^Z_1$ reactions). Moreover by Theorem \ref{theorem:all out flows}, the degradation reactions guarantee that the slack system is also accessible to $K$ from any state. 

As shown in Figure \ref{fig:Toggle}D, the mean first passage time of the slack system %started at $(X,Z,D^X_0,D^Z_0)=(0,0,1,1)$ 
appears to be converting to approximately $3.171\times 10^9$. To prove that the limit of the slack system is actually the original mean first passage time, we construct a Lyapunov functions satisfying the conditions of Theorem \ref{thm:Lyapunov}. 
%to show the mean first passage time associated with the slack network converges to the true mean first passage time as shown in Figure \ref{fig:Toggle}D. 
See Appendix \ref{sec:LyapunovToggle} for more details about the construction of the Lyapunov function.

\begin{figure*}[!htb]
    \centering
    \includegraphics[width =0.9 \textwidth]{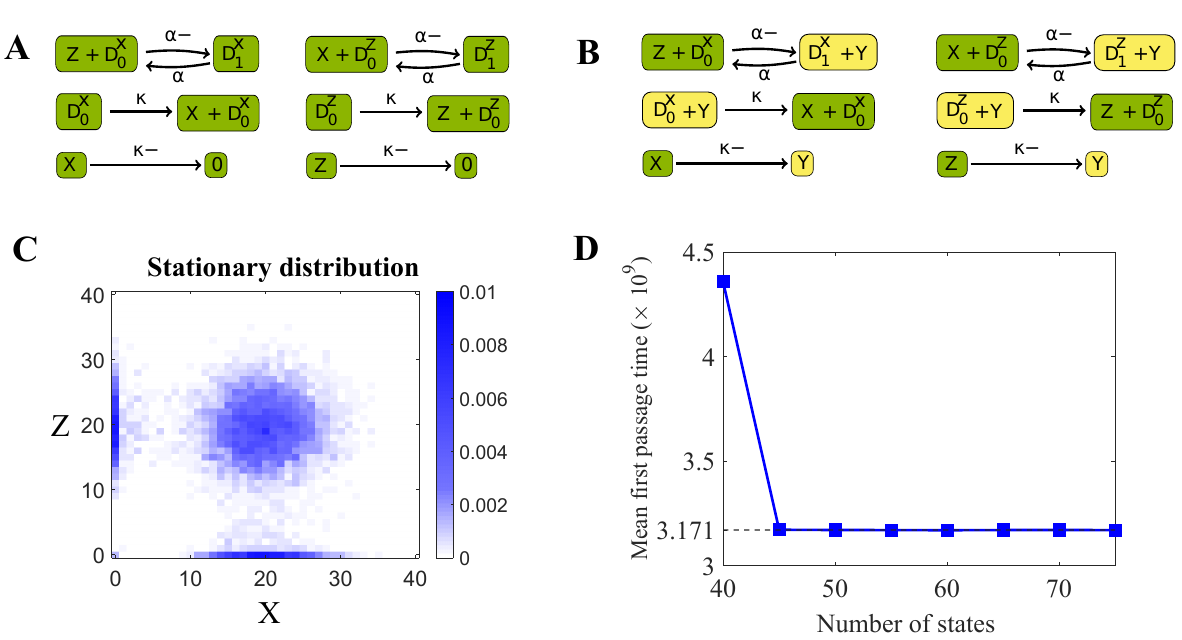}    \caption{\textbf{Calculating Mean First Passage Time of a Slow Toggle Switch Using Slack Reactants.} A. Protein synthesis with slow toggle switch.
    B. The toggle switch model with a slack reactant $Y$. Parameters are $\alpha-=0.1, \alpha=10, \kappa=2000$ and $\kappa-=100$. C. Multimodal long-term probability distribution of the original model. D. 
    Convergence of the first passage time of the slack system with various truncation sizes.
    }
    \label{fig:Toggle}
\end{figure*}

\section{Discussion \& Conclusions}
We propose a new state space truncation method for stochastic reaction networks (see Section~\ref{sec:slack reactant}). In contrast to other methods, such as FSP, sFSP and the finite buffer method, we truncate the state space indirectly by expanding the original chemical reaction network to include slack reactants. The truncation is imposed through user defined conservation relations among species.
%The given state space can be automatically reduced to a finite state space through the introduction of slack reactants by . 
The explicit network structure of slack reactants facilitates proofs of convergence (Section~\ref{sec:theorems}) and allows the use of existing software packages to study the slack network itself \cite{kazeroonian2016cerena}.
Indeed, any user-defined choices for conservation laws, conservation amounts, and stochiometric structure, can be used to construct a slack network with our algorithm. We provide guidelines for optimal user choices that can increase the similarity between the slack system and the original model (see Section~\ref{subsec:properties of slack}).

Slack systems can be used to estimate the dynamical behavior of the original stochastic model. In Section \ref{sec:compare to}, we used a simple example to show that the slack method can lead to a better approximation for the mean first passage time than the sFSP method and the finite buffer method. In particular, in Section~\ref{sec:theorems} we provide theorems that show the slack system approximates the probability density and the mean first passage time of the original system. Because slack networks preserve network properties, such as weak reversibility, the slack system is also likely to have the same accessibility to a target state as the original model (see Section~\ref{sec:acc}). In particular, we note that weak reversibility guarantees our slack truncation does not introduce absorbing states.

In Section \ref{sec:acc}, we show that this truncation method is natural in the sense that the stationary distributions of the original and slack system are identical up to multiplication by a constant when the original system is complex balanced. Finally, in Section~\ref{sec:practical examples}, we use slack networks to calculate first passage times for two biological examples.  We expect that this new theoretical framework for state space truncation will be useful in the study of other biologically motivated stochastic chemical reaction systems.

\section*{ACKNOWLEDGMENTS}
This work is partially supported by NSF grants DMS1616233, DMS1763272, BCS1344279,  Simons Foundation grant 594598 and the by the Joint DMS/NIGMS Initiative to Support Research at the Interface of the Biological and Mathematical Sciences (R01-GM126548).

\section*{Data Availability}
The data that support the findings of this study are available from the corresponding author upon reasonable request.
\appendix

\section{First passage time for Markov processes with finitely many states}\label{app:matrix inversion}
The Markov chain associated with a slack network has always a finite state space. There are many different methods to analytically derive the mean first passage time of a Markov chain with a finite state space \cite{hunter2018computation, hunter1980generalized,yvinec2012first,chou2014first}. In this paper, we use the method of Laplace transform which is also used in \cite{yvinec2012first}. 

For a continuous time Markov process, let $\mathbb S$ be the finite state space and let $Q$ be the transition rate matrix, i.e. $Q_{ij}$ is the transition rate from state $i$ to state $j$ if $i\neq j$ and $Q_{ii}=-\sum_{j \neq i} Q_{ij}$.

For a subset $K=\{i_1,i_2,\dots,i_k\} \subset \mathbb S$, we define a \emph{truncated transition matrix} $Q_K$ that is obtained by removing the $i_j$ th row and column from $Q$ for $j=1,2,\dots,k$. Then the mean first passage time to set $K$ starting from the $i$-th state is
the $i$-th entry of $-Q_K^{-1}\mathbf 1$, where $\mathbf 1$ is a column vector with each entry $1$.

\section{Proofs of Convergence Theorems}\label{app:proofs}
In this section, we provide the proofs of the theorems introduced in Section \ref{sec:theorems}. We use the same notations and the same assumptions as we used in Section \ref{sec:theorems}. 

\textbf{Proof of Theorem \ref{thm:conv of probs}:}
We employ the main idea shown in the proof of Theorem 2.1 in Hart and Tweedie \cite{hart2012convergence}. Let a state $\vec x$ and time $t$ be fixed. We consider large enough $N$ so that $\vec x \in \mathbb S_{N-1}$.

We use an FSP model on $\mathbb S_{N-1}$ of the original system $X$ with the designated absorbing state $\vec x^* \in \mathbb S_{N-1}^c$. Let $p^{\text{FSP}}_{N-1}$ be the probability density function of this FSP model.

Let $T_N$ be the first time for $X_N$ to hit $\mathbb S_N \setminus \mathbb S_{N-1}$.
We generate a coupling of $X_N$ and the FSP model restricted on $\mathbb S_{N-1}$ as they move together by $T_N$  and they move independently after $T_N$. Then $ p^{\text{FSP}}_{N-1}(\vec x,t)= P(X_N(t)=\vec x, T_N>t)$ for $\vec x\in \mathbb S_{N-1}$ because  the FSP model has stayed in $\mathbb S_{N-1}$ if and only if $X_N$ has never touched $\mathbb S_N\setminus \mathbb S_{N-1}$.  Thus 
\begin{align}
    p_N(\vec x,t)&=P(X_N(t)=\vec x,t<T_N)+P(X_N(t)=\vec x, t \ge T_N) \notag \\
    & = p^{\text{FSP}}_{N-1}(\vec x,t) + P(X_N(t)=\vec x, T_N\le t) \label{eq:conv of prob 1}\\
    &\ge  p^{\text{FSP}}_{N-1}(\vec x,t). \notag
\end{align}

Furthermore
\begin{align*}
    &P(X_N(t)=\vec x, t \ge T_N) \le P(t \ge T_N) \\
    &= p^{\text{FSP}}_{N-1} (\vec x^*,t)=1-p^{\text{FSP}}_{N-1} (\mathbb S_{N-1},t),
\end{align*} 
where we used the fact that after $T_N$, the FSP process is absorbed at $\vec x^*$.
Thus
\begin{align}\label{eq:conv of prob final}
  p^{\text{FSP}}_{N-1}(\vec x,t)\le    p_N(\vec x,t) \le p^{\text{FSP}}_{N-1}(\vec x,t)+ 1-p^{\text{FSP}}_{N-1} (\mathbb S_{N-1},t).
\end{align}
Note that 
\begin{align*}
    p^{\text{FSP}}_{N-1}(\vec x,t) = P(X(t)=\vec x, t<T_N) \le p(\vec x,t).
\end{align*}
Since $T_N$ increases to  $\infty$ almost surely, as $N$ increases, 
$p^{\text{FSP}}_{N-1}(\vec x,t)$ monotonically increases in $N$ and converges to $p(\vec x,t)$, as $N\to \infty$ for each $\vec x\in \mathbb S_N$.
Then by using the monotone convergence theorem, the term $$p^{\text{FSP}}_{N-1} (\mathbb S_{N-1},t)=\sum_{x\in \mathbb S}p^{\text{FSP}}_{N-1} (x,t)\mathbbm{1}_{\{x \in \mathbb S_{N-1}\}}$$
converges to $p(\mathbb S,t)=1$.
Therefore, by taking $\dlim_{N\to \infty}$ in both sides of \eqref{eq:conv of prob final} the result follows. \hfill $\square$

\textbf{Proof of Theorem \ref{thm:conv of stationary}:}
This proof is a slight generalization of the proof of Theorem 3.3 in Hart and Tweedie \cite{hart2012convergence}.
Since the convergence of $\Vert p_N(\cdot,t)-\pi_N\Vert_1$ is independent of $N$, for any $\epsilon>0$, we choose sufficiently large $t_0$ such that 
\begin{align*}
    \Vert p(\cdot,t_0)-\pi(t_0)\Vert_1 \le \epsilon \quad \text{and} \quad \Vert p_N(\cdot,t_0)-\pi_N(t_0)\Vert_1 \le \epsilon
\end{align*}
for all $N$. Then by using the triangle inequalities
\begin{align}
    \Vert \pi-\pi_N\Vert_1 &\le \Vert p(\cdot,t_0) - \pi \Vert_1 + \Vert p_N(\cdot,t_0)-\pi_N\Vert_1 \notag \\
    &\ \ + \Vert p(\cdot,t_0)-p_N(\cdot, t_0)\Vert_1\notag \\
    &\le \Vert p(\cdot,t_0) - \pi \Vert_1 + \Vert p_N(\cdot,t_0)-\pi_N\Vert_1 \notag \\
    &\ \ + \Vert p(\cdot,t_0)-p^{\text{FSP}}_{N-1}(\cdot,t_0) \Vert_1 \notag \\
    & \ \ +\Vert p^{\text{FSP}}_{N-1}(\cdot,t_0)-p_N(\cdot,t_0) \Vert_1 \notag \\
    \begin{split}\label{eq:conv of mfpt final}
    & \le 2\epsilon + \sum_{\vec x\in \mathbb S} |p(\vec x ,t_0)-p^{\text{FSP}}_{N-1}(\vec x,t_0)|\\
    &\ \ + \sum_{x\in \mathbb S}|p^{\text{FSP}}_{N-1}(\vec x,t_0)-p_N(\vec x,t_0)|.
    \end{split}
\end{align}
Note that as we mentioned in the proof of Theorem \ref{thm:conv of probs}, we have monotone convergence of $p^{\text{FSP}}_{N-1}(\vec x,t_0)$  to $p(\vec x,t_0)$ for each $\vec x \in \mathbb S_{N-1}$, as $N \to \infty$. 
Hence by the monotone convergence theorem, the first summation in \eqref{eq:conv of mfpt final} goes to zero, as $N\to \infty$. Note further that from \eqref{eq:conv of prob 1} we have $$|p^{\text{FSP}}_{N-1}(\vec x,t_0)-p_N(\vec x,t_0)|= P(X_N(t)=\vec x, T_N\le t_0).$$ Hence the second summation in \eqref{eq:conv of mfpt final} satisfies
\begin{align*}
    \sum_{x\in \mathbb S}|p^{\text{FSP}}_{N-1}(\vec x,t_0)-p_N(\vec x,t_0)| &\le
    \sum_{x\in \mathbb S_{N-1}}|p^{\text{FSP}}_{N-1}(\vec x,t_0)-p_N(\vec x,t_0)|\\
    & \ \ + p^{\text{FSP}}_{N-1}(x^*,t_0) \\
    & \ \ + P(X_N(t)\in \mathbb S_N\setminus \mathbb S_{N-1}).
\end{align*}
Note that by \eqref{eq:conv of prob 1}
\begin{align*}
&\sum_{x\in \mathbb S_{N-1}}|p^{\text{FSP}}_{N-1}(\vec x,t_0)-p_N(\vec x,t_0)|\\
&=\sum_{x\in \mathbb S_{N-1}}P(X_N(t_0)=\vec x,t_0\ge T_N) = P(t_0\ge T_N)    
\end{align*}
 Furthermore $p^{\text{FSP}}_{N-1}(x^*,t_0)=P(t_0\ge T_N)$ and $P(X_N(t)\in \mathbb S_N\setminus \mathbb S_{N-1})=P(T_N<t_0)$. 
Hence $\sum_{x\in \mathbb S}|p^{\text{FSP}}_{N-1}(\vec x,t_0)-p_N(\vec x,t_0)|\to 0$, as $N\to \infty$ because $T_N\to \infty$ almost surely, as $N\to \infty$. 

Consequently, we have 
\begin{align*}
    \lim_{N\to \infty} \Vert \pi-\pi_N\Vert_1  \le 2\epsilon.
\end{align*}
Since we choose an arbitrary $\epsilon$, the completes the proof.
 \hfill $\square$

In order to prove Theorem \ref{thm:conv of MFPT}, we consider an `absorbing' Markov process associated with $X$ and $X_N$.
Let $\bar X$ and $\bar X_N$ be Markov processes such that
\begin{align*}
    \bar X(t)=\begin{cases}
    X(t) &\text{ if $t<\tau$,}\\
    X(\tau) &\text { if $t\ge \tau$}.
    \end{cases}\\
    \bar X_N(t)=\begin{cases}
    X_N(t) &\text{ if $t<\tau_N$,}\\
    X_N(\tau_N) &\text { if $t\ge \tau_N$}.
    \end{cases}
\end{align*}
That is, $\bar X$ and $\bar X_N$ are coupled process to $X$ and $X_N$, respectively. Furthermore they are absorbed to $K$ once the coupled process ($X$ for $\bar X$ and $X_N$ for $\bar X_N$) visits the set $K$. These coupled process have the following relation.

\begin{lem}\label{lem1}
%Let $t$ be fixed and let $\epsilon>0$ be fixed.
Let $t \geq 0$ be fixed and $\epsilon > 0$ be arbitrary.
Suppose $X$ admits a stationary distribution $\pi$.
Suppose further that $\dlim_{N\to \infty} \Vert \pi - \pi_N\Vert_1=0$. Then there exists a finite subset $\bar K$ such that $P(\bar X(t) \in \bar K^c) < \epsilon$ and $P(\bar X_N(t) \in \bar K^c) < \epsilon$ for sufficiently large $N$.
\end{lem}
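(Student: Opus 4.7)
My plan is to establish pointwise convergence of the time-$t$ law of $\bar X_N$ to that of $\bar X$, and then invoke countable additivity on the state space $\mathbb S$ to extract a single finite set $\bar K$ whose complement carries small mass for both $\bar X(t)$ and $\bar X_N(t)$ (the latter for all sufficiently large $N$). The main tool is the FSP coupling used in the proof of Theorem \ref{thm:conv of probs}.

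First, I would couple $X$ and $X_N$ so that they coincide as paths on $[0, T_N)$ and evolve independently afterward, where $T_N = \inf\{s \geq 0 : X_N(s) \in \mathbb S_N \setminus \mathbb S_{N-1}\}$. On the event $\{T_N > t\}$ the two coupled trajectories agree on all of $[0, t]$; consequently $\tau \wedge t = \tau_N \wedge t$ and, whenever this common stopping time is at most $t$, the two processes absorb at the same state. In particular $\bar X(t) = \bar X_N(t)$ on $\{T_N > t\}$, which yields
\begin{align*}
    \bigl|P(\bar X_N(t) = x) - P(\bar X(t) = x)\bigr| \;\leq\; P(T_N \leq t) \xrightarrow[N\to\infty]{} 0
\end{align*}
for every $x \in \mathbb S$, because $T_N \uparrow \infty$ almost surely.

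Second, I would pick a finite $\bar K \subset \mathbb S$ with $P(\bar X(t) \in \bar K^c) < \epsilon/2$; such a $\bar K$ exists by countable additivity, since $\bar X(t)$ has a probability distribution on the countable set $\mathbb S$. Summing the pointwise bound above over the finite set $\bar K$ gives $P(\bar X_N(t) \in \bar K) \to P(\bar X(t) \in \bar K)$ as $N \to \infty$, so $P(\bar X_N(t) \in \bar K^c) < \epsilon$ for all $N$ sufficiently large; the bound $P(\bar X(t) \in \bar K^c) < \epsilon/2 < \epsilon$ is immediate.

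The main technical point is the path-level, rather than merely marginal, identification $\bar X(t) = \bar X_N(t)$ on $\{T_N > t\}$: for a target state $x \in K$, convergence of $P(\bar X_N(t) = x)$ to $P(\bar X(t) = x)$ requires the absorption times and locations to agree, not just the positions at time $t$. This drops out of the coupling because the entire paths on $[0,t]$ coincide on the good event, so the two processes enter $K$ (if they do so by time $t$) simultaneously and at the same state; the remaining discrepancy is confined to $\{T_N \leq t\}$, whose probability vanishes as $N \to \infty$.
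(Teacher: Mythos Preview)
Your proof is correct and takes a genuinely different route from the paper's.

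The paper proves the lemma by first bounding $P(\bar X(t)=\vec x)\le P(X(t)=\vec x)$ for $\vec x\in K^c$ (and similarly for $X_N$), and then invoking monotonicity of the chemical master equation: since the deterministic initial condition satisfies $\delta_{\vec x_0}\le \gamma_1\pi$ and $\delta_{\vec x_0}\le \gamma_2\pi_N$ (the latter uniformly in $N$, which is where the hypothesis $\|\pi-\pi_N\|_1\to 0$ enters, via a uniform lower bound on $\pi_N(\vec x_0)$), one gets $P(X(t)\in \bar K^c)\le \gamma_1\pi(\bar K^c)$ and $P(X_N(t)\in \bar K^c)\le \gamma_2\pi_N(\bar K^c)$. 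Choosing a finite $\bar K$ with $\pi(\bar K^c)$ small then finishes.

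Your approach bypasses the stationary distributions entirely. You establish the pointwise convergence $P(\bar X_N(t)=\vec x)\to P(\bar X(t)=\vec x)$ via the path-level FSP coupling (the same mechanism the paper itself invokes later, in the proof of Theorem~\ref{thm:conv of MFPT}), and then use tightness of the single law $P(\bar X(t)\in\cdot)$ together with a finite sum over $\bar K$ to transfer the bound to $\bar X_N$. Your observation that the whole paths of $X$ and $X_N$ agree on $[0,t]$ under the event $\{T_N>t\}$, so that the absorbed processes $\bar X$ and $\bar X_N$ also agree there, is exactly the right point.

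Two remarks on what each approach buys. First, your argument does not actually use the hypotheses that $X$ admits a stationary distribution or that $\|\pi-\pi_N\|_1\to 0$; you have shown the conclusion holds unconditionally for each fixed $t$. Second, the paper's argument yields a $\bar K$ that works uniformly for all $t\ge 0$, whereas your $\bar K$ depends on the fixed $t$. Since the lemma is stated and applied only for a fixed $t$, your simplification is entirely adequate for its role in the proof of Theorem~\ref{thm:conv of MFPT}.
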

\begin{proof}
Since the probabilities of $\bar X$ and $\bar X_N$ are leaking to $K$, we have for each $t$ and for each $\vec x \in K^c$ 
\begin{align}
\begin{split}\label{eq:two probs bounded}
&P(\bar X(t)=\vec x) \le P( X(t)=\vec x), \quad \text{and} \\
&P(\bar X_N(t)=\vec x) < P( X_N(t)=\vec x).
\end{split}
\end{align}
This can be formally proved as for each $x\in K^c$
\begin{align*}
    P(X(t)=\vec x)&=P(X(t)=\vec x, \tau > t)+P(X(t)=\vec x, \tau \le t) \\
    &= P(\bar X(t)=\vec x)+P(X(t)=\vec x, \tau^K \le t)\\
    &\ge P(\bar X(t)=\vec x).
\end{align*}
In the same way, we can prove that $P(\bar X_N(t)=\vec x) < P( X_N(t)=\vec x)$ for each $\vec x\in K^c$.

Let $\epsilon'>0$ be arbitrary. Then $\dlim_{N\to \infty} \Vert \pi - \pi_N\Vert_1=0$ implies that for any subset $U$, we have 
\begin{align}\label{eq:two pi's are close}
\pi_N(U)\le \pi(U)+\epsilon' \quad \text{and}\quad     \pi(U)\le \pi_N(U)+\epsilon',
\end{align}
for sufficiently large $N$.
We use this property and \eqref{eq:two probs bounded} combined with the `monotonicity' of the chemical master equation to show the result. 

First of all, note that we are assuming $X(0)=X_N(0)=\vec x_0$. Hence there is a constant $\gamma_1>0$ such that $$P(X(0)=\vec x)=\mathbbm{1}_{\{\vec x=\vec x_0\}} \le \gamma \pi(\vec x)$$ for all $\vec x$. Moreover by choosing sufficiently small $\epsilon'$ in \eqref{eq:two pi's are close},  we have the minimum $\min_N \pi_N(\vec x_0) \ge \pi(\vec x_0)-\epsilon'>0$. Thus there exists $\gamma_2$ such that 
$$P(X_N(0)=\vec x) =\mathbbm{1}_{\{\vec x=\vec x_0\}} \le \gamma_2 \pi_N(\vec x),$$    
for all sufficiently large $N$ and for each $\vec x$. Note that the chemical master equation is a system of ordinary differential equations, and $P(X(t)=\cdot)$ and $\pi$ are the solution to the system with the initial conditions $P(X(0)=\cdot)$ and $\pi$. Hence the monotonicity of a system of ordinary differential equations, it follows that 
\begin{align}\label{eq:p bound}
P(X(t)=x) \le \gamma_1 \pi(x) \quad \text{for any $x$ and $t$}. 
\end{align}
In the same way, for any $N$ it follows that  
\begin{align}\label{eq:pn bound}
P(X_N(t)=x) \le \gamma_2 \pi_N(x)\quad \text{for any $x$ and $t$}.    
\end{align}
 The detailed proof about the monotonicity of the chemical master equation is shown in Lemma 3.2 of Enciso and Kim. \cite{enciso2019constant}

Secondly, there exists a finite set $\bar K$ such that $\pi(\bar K^c)< \epsilon'$ because $\pi$ is a probability distribution,  Furthermore  by \eqref{eq:two pi's are close}, $\pi_N(\bar K^c) \le \pi(\bar K^c)+\epsilon'< 2\epsilon'$ for any sufficiently large $N$.

Finally we choose $\epsilon'=\frac{\epsilon}{2\gamma}$, where $\gamma=\max\{\gamma_1,\gamma_2\}$. Then by summing up \eqref{eq:two probs bounded}, \eqref{eq:p bound} and \eqref{eq:pn bound} over $x\in \bar K^c$, the result follows.
\end{proof}

\vspace{0.5cm}

We prove Theorem \ref{thm:conv of MFPT} by using the `absorbing` Markov processes $\bar X$ and $\bar X_N$ coupled to $X$ and $X_N$, respectively.

\textbf{Proof of Theorem \ref{thm:conv of MFPT}:}
We first break the term $P(\tau>t)-P(\tau_N>t)$ to show that
$$\lim_{N\to \infty}|P(\tau>t)-P(\tau_{N}>t)|=0.$$

Let $\epsilon>0$ be an arbitrarily small number. Let $\bar K$ be a finite set we found in Lemma \ref{lem1}.
Then by  using triangular inequalities, 
\begin{align}\label{eq:P of Xbar's conv}
    \begin{split}
    &|P(\tau > t)-P(\tau_N > t)|\\
    &=|P(\overline X(t) \in K^c)-P(\overline X_N(t) \in K^c)| \\
     &\le \sum_{\vec x\in K^c \cap \bar K} |P(\overline X(t)=\vec x)-P(\overline X_N(t) =\vec x)|\\
     &+\sum_{\vec x\in K^c \cap \bar K^c} |P(\overline X(t)=\vec x)-P(\overline X_N(t) =\vec x)| \\
     &\le \sum_{\vec x\in K^c \cap \bar K} |P(\overline X(t)=\vec x)-P(\overline X_N(t) =\vec x)| \\
     & \ \ + P(\overline X(t) \in \bar K^c) + P(\overline X_N (t) \in \bar K^c) \\
     &\le \sum_{\vec x\in K^c \cap \bar K} |P(\overline X(t) =\vec x)-P(\overline X_N(t) =\vec x)| + 2\epsilon. 
\end{split}
\end{align}

Note that by the same proof of Theorem  \ref{thm:conv of probs}, we have the convergence
\begin{align*}
    \lim_{N\to \infty}|P(\overline X(t) =\vec x)-P(\overline X_N(t) =\vec x)| = 0 \quad \text{for each $\vec x \in \mathbb S$}.
\end{align*}
Since the summation $\sum_{\vec x\in K^c \cap \bar K}$ is finite, we have that by taking $N\to \infty$ in \eqref{eq:P of Xbar's conv}
\begin{align}\label{eq:PP conv}
    \lim_{N\to \infty}|P(\tau^K>t)-P(\tau^K_N>t)|=0+2\epsilon.
\end{align}

Now, to show the convergence of the mean first passage times, note that 
\begin{align*}
    |E(\tau)-E(\tau_N)|\le \int_0^\infty|P(\tau>t)-P(\tau_N>t)|dt,
\end{align*}
where the integrand is bounded by $P(\tau_K>t)+g(t)$. 

Condition (iii) in Theorem \ref{thm:conv of MFPT} implies that $E(\tau_k)<\infty$ and $\int_0^\infty g(t)dt < \infty$. Hence the dominant convergence theorem and \eqref{eq:PP conv} imply that 
\begin{align*}
    \lim_{N\to \infty}|E(\tau)-E(\tau_N)|=0.
\end{align*}

 \hfill $\square$

The existence of a special Lyapunov function ensures that the conditions in Theorem \ref{thm:conv of MFPT} hold.
In order to use the absorbing Markov processes, as we did in the previous proof, we define a Lyapunov function for $\overline X$ based on a given Lyapunov function $V$. Let $\bar \lambda_{\nu \to \nu'}$ and $\bar \lambda^N_{\nu \to \nu'}$ denote the intensity of a reaction $\nu \to \nu'$ for $\overline X$ and $\overline{X}_N$, respectively.
For a given function $V$ such that $V(\vec x)\ge 1$ for any $\vec x \in \mathbb S$, we define $\overline V$ such that
\begin{align*}
    \overline V(\vec x) =\begin{cases}
    V(\vec x) \quad &\text{if $\vec x \in K^c$}\\
    1\quad &\text{if $\vec x \in K$},\\
    \end{cases}
\end{align*}
 so that $V(\vec x)\ge V(\vec x)$ for any $\vec x$.

Note that $\bar \lambda_{\nu\to \nu'}(x)=\lambda_{\nu\to \nu'}(x)$ if $x\in K^c$.
Hence for each $\vec x\in K^c$,
\begin{align}
\begin{split}\label{eq: relation between v and v bar}
    &\sum_{\nu\to \nu' \in \Re}\bar \lambda_{\nu\to \nu'}(\vec x)(\overline V(\vec x + \vec \nu' - \vec \nu)-\overline V(\vec x))\\
    &\le\sum_{\nu\to \nu' \in \Re}\lambda_{\nu\to \nu'}(\vec x)(V(\vec x + \vec \nu' - \vec \nu)- V(\vec x))
    \end{split}
\end{align}
because $V(\vec x)=\overline V(\vec x)$ and $V(x+\nu'-\nu)\ge\overline V(x+\nu'-\nu)$ for every reaction $x+\nu'-\nu$.
Moreover, Since $\bar \lambda_{\nu\to \nu'}(x)=0\le \lambda_{\nu  \to \nu'}(x)$ if $x\in K$ by the definition of $\overline X$ for each reaction $\nu\to \nu'$.
Hence, for each $x\in K$
\begin{align}\label{eq: relation between v and v bar2}
    \sum_{\nu\to \nu' \in \Re}\bar \lambda_{\nu\to \nu'}(\vec x)(\overline V(\vec x + \vec \nu' - \vec \nu)-\overline V(\vec x))=0.
    \end{align}
From \eqref{eq: relation between v and v bar} and \eqref{eq: relation between v and v bar2}, therefore, we conclude that for any $x$
\begin{align}
\begin{split}\label{eq:Lyapunov condition of bar x}
     &\sum_{\nu\to \nu' \in \Re}\bar \lambda_{\nu\to \nu'}(\vec x)(\overline V(\vec x + \vec \nu' - \vec \nu)-\overline V(\vec x))\\
     &\le \begin{cases}
     -CV(\vec x)+D \quad &\text{if $\vec x \in K^c$},\\
     0 &\text{if $\vec x \in K$}
     \end{cases}\\
     &\le
     -C'\bar V(x) +D',
\end{split}
\end{align}
where $C'=C$ and $D'=\max\{C+1, D\}$.

\textbf{Proof of Theorem \ref{thm:Lyapunov}: }
In this proof, we show that all the conditions in Theorem \ref{thm:conv of MFPT} are met by using the given function $V$. First, we show that $X$ admits a stationary distribution. This follows straightforwardly by Theorem 3.2 in Hard and Tweedie \cite{hart2012convergence} because condition 3 in Theorem \ref{thm:Lyapunov} means that $V$ is a Lyapunov function for $X$.

The condition basically means that every outward reaction $\nu\to \nu'$ (i.e. $\vec x+\nu'-\nu \in \mathbb S^c_N$ and $\vec x\in \mathbb S_N$) in $\Re$ gives a non-negative drift as $V(x+\nu'-\nu)-V(x)\ge 0$. We use condition 2 in Theorem \ref{thm:Lyapunov} to show that $V$ is also a Lyapunov function for $X_N$ for any N.

We denote by $\lambda^N_{\tilde \nu\to \tilde \nu'}$ the intensity of a reaction $\tilde \nu\to \tilde \nu'$ in $(\widetilde \S,\widetilde \C, \widetilde \Re,\Lambda^N)$. Suppose that a reaction $\tilde \nu \to \tilde \nu'\in \widetilde \Re$ is obtained from $\nu\to \nu' \in \Re$ by adding a slack reactant. That is, $q(\tilde \nu'-\tilde \nu)=\nu'-\nu$, where $q$ is a projection function such that $q(x_1,\dots,x_d,x_d+1,\dots,x_{d+r})=(x_1,\dots,x_d)$. Then, by the definition \eqref{eq:slack intensity} of the intensity in a slack network, we have $\lambda^N_{\tilde \nu\to \tilde \nu'}(\vec x)=0$ when $\vec x + q(\tilde \nu'-\tilde \nu) \not \in \mathbb S_N$, because the $X_N$ is confined in $\mathbb S_N$. 
%if $y< \tilde \nu_{d+1}$ and $\lambda^N_{\tilde \nu\to \tilde \nu'}(\vec x)=\lambda_{\nu \to \nu'}(\vec x)$ otherwise. 
Furthermore, by condition 2 in Theorem \ref{thm:Lyapunov}, $V(x+\vec \nu'-\vec \nu) \ge V(x)$ when $\vec x+q(\tilde \nu'-\tilde \nu) \not \in \mathbb S_N$ because this means that $x+\vec \nu'-\vec \nu \in \mathbb S_M$ for some $M>N$. This implies if $x+\vec \nu'-\vec \nu\not \in  \mathbb S_N$
\begin{align}
\begin{split}\label{eq:relation V and VN}
    0& = \lambda^N_{\tilde \nu \to \tilde \nu'}(\vec x)(V(x+\vec \nu'-\vec \nu)-V(x))\\
    &\le \lambda_{\nu\to \nu'}(\vec x)(V(x+\vec \nu'-\vec \nu)-V(x)).
    \end{split}
\end{align}
In case $\vec x+q(\tilde \nu'-\tilde \nu) \in \mathbb S_N \subseteq \mathbb S$, we have that $\lambda^N_{\tilde \nu \to \tilde \nu'}(\vec x)=\lambda_{\nu \to \nu'}(\vec x)$ by the definition \eqref{eq:slack intensity}. 
Hence by condition 3 and \eqref{eq:relation V and VN}  we have for any $\vec x$ 
\begin{align}
\begin{split}\label{eq:relation2 V and VN }
   &\sum_{\tilde \nu \to \tilde \nu' \in \Re}\lambda^N_{\tilde \nu \to \tilde \nu'}(\vec x)(V(\vec x + \vec \nu' - \vec \nu)- V(\vec x))\\
   &\le  \sum_{\nu\to \nu' \in \Re}\lambda_{\nu\to \nu'}(\vec x)(V(\vec x + \vec \nu' - \vec \nu)- V(\vec x))\\
   &\le -CV(x)+D.
   \end{split}
\end{align}
Thus $V$ is also a Lyapunov function for $X_N$.
Hence Theorem 6.1 (the Foster-Lyapunov criterion for exponential ergodicity) in Meyn and Tweedie \cite{meyn1993stability} implies that for each $N$ there exist $\beta>0$ and $\eta>0$, which are only dependent of $C$ and $D$, such that
\begin{align*}
    \Vert p_N(\cdot,t)-\pi_N\Vert_1 \le \beta V(\vec x_0) e^{-\eta t}.  
\end{align*}
This guarantees that the condition in Theorem \ref{thm:conv of stationary} holds with $h(t)=\beta V(\vec x_0) e^{-\eta t}$. Hence we have $\dlim_{N\to \infty} \Vert \pi - \pi_N \Vert_1 =0$ by Theorem \ref{thm:conv of stationary}.

Now to show that the first passage time $\tau$ has the finite mean, we apply the Foster-Lyapunov criterion to $\bar X$. Since \eqref{eq:Lyapunov condition of bar x} meets the conditions of Theorem 6.1 in Meyn and Tweedie \cite{meyn1993stability}, the probability of $\bar X$ converges in time to its stationary distribution exponentially fast. That is, for any subset $U$, there exist $\bar \beta>0$ and $\bar \eta >0$ such that
\begin{align*}
    \Vert P(\bar X(t) \in U)-\bar \pi(U) \Vert \le   \beta V(\vec x_0)e^{-\eta t},
\end{align*}
where $\bar \pi$ is the stationary distribution of $\overline X$.
This, in turn, implies that
\begin{align}
\begin{split}\label{eq:derive finite moment}
    E(\tau_K)&=
    \int_0^\infty P(\tau > t)dt\\
    &=\int_0^\infty P(\bar X(t) \in K^c) dt\\
&\le \int_0^\infty |P(\bar X(t) \in K^c)-\bar \pi(K^c)|+\bar \pi(K^c) dt\\
&\le \int_0^\infty \beta V(\vec x_0)e^{-\eta t} dt < \infty,
\end{split}
\end{align}
where the second inequality follows as $\bar \pi(K^C)=0$, which is because $\bar X$ is eventually absorbed in $K$ as the original process $X$ is irreducible and closed. 

Finally we show that for any $N$, there exists $g(t)$ such that $P(\tau_{N}<t) < g(t)$ and $\int_0^\infty g(t)dt <\infty$ for the first passage time $\tau_{K,N}$. By the same reasoning we used to derive \eqref{eq:Lyapunov condition of bar x}, we also derive by using \eqref{eq:relation2 V and VN } that
\begin{align}
\begin{split}\label{eq:Lyapunov condition of bar x N}
    &\sum_{\tilde \nu\to \tilde \nu' \in \Re}\bar \lambda^N_{\tilde \nu\to \tilde \nu'}(\vec x,y)(\overline V(\vec x +  \nu' - \nu )-\overline V(\vec x))\\
     & \le  -C' \overline V(\vec x) +D'.
\end{split}
\end{align}
Hence in the same way as used for \eqref{eq:derive finite moment}, we have the exponential ergodicty of $\overline X_N$ by Theorem 6.1 in Meyn and Tweedie \cite{meyn1993stability}, and then we derive that
\begin{align*}
    P(\tau_{N}>t)&=P(\overline X_N(t) \in K^c) \\
&\le |P(\overline X(t) \in K^c)-\bar \pi _N(K^c)|+\bar \pi_N(K^c) \\
&\le \beta V(\vec x_0)e^{-\eta t} ,
\end{align*}
where $\bar \pi_N$ is the stationary distribution of $\overline X_N$ which also has zero probability in $K^C$ as $\overline X_N$ is eventually absorbed in $K$. Since $\beta$ and $\eta$ only depend on $C'$ and $D'$, we let $g(t)=\beta V(\vec x_0)e^{-\eta t} $ that satisfies that $P(\tau_{K,N}>t)\le g(t)$ for any $N$ and $\int_0^\infty g(t) dt < \infty$. \hfill $\square$

\section{Proofs of Accessibility Theorems}\label{app:proofs2}
In this section, we prove Theorem \ref{theorem:all out flows}. We begin with a necessary lemma.
In the following lemma, for a given reaction system $(\S, \C,\Re, \Lambda)$, we generate a slack system 
$(\widetilde S, \widetilde \C, \widetilde \Re, \Lambda^N)$ admitting a single slack reactant $Y$. We denote by $\vec w$ the vector for which the slack system admits a the conservation bound $\vec w\cdot x \le N$ for each state $x$ of the slack system. We also let $c$ be the maximum stochiometry coefficient of the slack reactant $Y$ in the slack network. 
Finally note that $\widetilde \C$ and $\C$ has the one-to-one correspondence as every complex $\tilde \nu \in \widetilde \C$ is obtained by adding the slack reactant $Y$ to a complex $\nu \in \C$ with the stochiometric coefficient $u_{\nu}$. That is, $\tilde \nu=(\nu,u)$ for some $\nu\in \C$ and $u_{\nu}$. Hence as the proof of Theorem \ref{thm:Lyapunov}, we let $q$ be a one-to-one projection function such that $q(\tilde \nu)=\nu$ where $\tilde \nu=(\nu,u)$ for some $\nu \in \C$ and $u_{\nu}$.
\begin{lem}\label{lem2}
For a reaction system $(\S,\C,\Re,\Lambda)$, let $(\widetilde S, \widetilde \C, \widetilde \Re,\Lambda^N)$ be a slack system with a single slack reactant $Y$ and a conservation vector $\vec w$. Let $c$ be the maximum stochiometric coefficient of the slack reactant $Y$ in $(\widetilde S, \widetilde \C, \widetilde \Re, \Lambda^N)$. Then if $c \le N-(\vec w\cdot \vec x)$ for a state $\vec x$, then
\begin{align*}
    \lambda^N_{\tilde \nu\to \tilde \nu'}(x) >0 \quad \text{if only if} \quad \lambda_{\nu\to \nu'}(x)>0,
\end{align*}
where $\tilde \nu \to \tilde \nu' \in \widetilde \Re$ and $\nu \to \nu' \in \Re$ are reactions such that
$q(\tilde \nu)=\nu$ and $q(\tilde \nu)= \nu'$.
\end{lem}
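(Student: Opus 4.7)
The plan is to unwind the definition of the slack intensity in \eqref{eq:slack intensity} and observe that the hypothesis $c \le N-(\vec w\cdot \vec x)$ precisely guarantees that the indicator factor in that definition is identically one, so that the slack intensity and original intensity agree exactly at $\vec x$ (whence the ``iff'' on positivity is trivial).

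More concretely, first I would write $y=N-\vec w\cdot \vec x$ for the implicit copy number of the slack reactant $Y$ at state $\vec x$, and recall that for a single slack reactant the definition \eqref{eq:slack intensity} reads
\begin{align*}
\lambda^{N}_{\tilde \nu \to \tilde \nu'}(\vec x)=\lambda_{\nu\to \nu'}(\vec x)\,\mathbbm{1}_{\{y\,\ge\,\tilde \nu_{d+1}\}},
\end{align*}
where $\tilde \nu_{d+1}$ denotes the stoichiometric coefficient of $Y$ in the reactant complex $\tilde \nu$. Next, I would invoke the definition of $c$ as the \emph{maximum} stoichiometric coefficient of $Y$ over all complexes in $\widetilde \C$, which yields $\tilde \nu_{d+1}\le c$ for every $\tilde \nu \in \widetilde \C$; combined with the hypothesis $c\le N-\vec w\cdot \vec x=y$, this gives $\tilde \nu_{d+1}\le y$ and therefore $\mathbbm{1}_{\{y\ge \tilde \nu_{d+1}\}}=1$.

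Substituting back, I obtain the pointwise equality
\begin{align*}
\lambda^{N}_{\tilde \nu \to \tilde \nu'}(\vec x)=\lambda_{\nu\to \nu'}(\vec x),
\end{align*}
which is strictly stronger than the claimed iff on positivity and therefore closes the proof. There is essentially no obstacle here: the entire argument is a definition-chasing calculation, and the crucial conceptual step is simply recognizing that $c$ serves as a uniform upper bound on the slack stoichiometry across all reactant complexes, so a single inequality on the available slack $y$ suffices to make \emph{every} indicator in \eqref{eq:slack intensity} evaluate to one simultaneously. If I wanted to anticipate its later use (to lift accessibility results such as those needed in Theorem~\ref{theorem:all out flows}), I would note that the lemma is exactly the statement that the slack dynamics ``look identical'' to the original dynamics away from the truncation boundary, with a quantitative buffer of width $c$ measured by $\vec w$.
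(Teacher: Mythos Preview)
Your proposal is correct and matches the paper's own proof essentially verbatim: the paper likewise unwinds \eqref{eq:slack intensity}, observes that the iff holds whenever $y=N-\vec w\cdot \vec x\ge u_\nu$ (the slack stoichiometry in the source complex), and then invokes $c=\max_\nu u_\nu$ together with the hypothesis $c\le y$ to conclude. Your remark that one actually obtains the pointwise equality $\lambda^{N}_{\tilde \nu \to \tilde \nu'}(\vec x)=\lambda_{\nu\to \nu'}(\vec x)$ is a fair (and slightly sharper) restatement of the same computation.
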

\begin{proof}
Let $\tilde \nu \in \widetilde \C$ such that $\tilde \nu=(\nu,u_{\nu})$ with the stoichiometric coefficient $u_{\nu}$ of $Y$.
By the definition of $\lambda^N_{\tilde \nu\to \tilde \nu'}$ shown in \eqref{eq:slack intensity}, 
\begin{align*}
    \lambda^N_{\tilde \nu\to \tilde \nu'}(x) >0 \quad \text{if only if} \quad \lambda_{\nu\to \nu'}(x)>0,
\end{align*}
so long as $y=N-\vec w \cdot \vec x \ge u_{\nu}$. Since $c=\max_{\nu \in \C} \{u_{\nu}\}$, the result follows.
\end{proof}

\vspace{0.5cm}

\textbf{Proof of Theorem \ref{theorem:all out flows}: }
We denote by $\mathbb S$ the irreducible and closed state space of $X$ such that $X(0)=\vec x_0$. We also denote by $\mathbb S_{N}$ the communication class of $X_N$ such that $x_0 \in \mathbb S_{N}$ for each $N$. Then $\mathbb S_N\subseteq \mathbb S$, and the irreducibility of $\mathbb S$ guarantees that $\cup_{i=1}^\infty \mathbb S_i = \mathbb S$. Therefore there exists $N_1$ such that $\mathbb S_{N,\vec x_0} \cap K\neq \emptyset$ if $N\ge N_1$. This implies that it is enough to show that $\mathbb S_{N,\vec x_0}$ is closed for $N$ large enough because $\mathbb S_{N,\vec x_0}$ is a finite subset. To prove this, we claim  that there exists $N_0$ such that if $N\ge N_0$, then $\mathbb S_N$ admits no out-going flow.

To prove the claim by contradiction, we assume that for a sequence $N_k$ such that $\lim_{k\to \infty}N_k=\infty$, there is an out-going flow from $\mathbb S_{N_k}$ to a state $\vec x_k \in \mathbb S_{N_k}^c$. We find a sequence of reactions with which $X_{N_k}$ returns to $\mathbb S_{N_k}$ from $\vec x_k$ for all large enough $k$.

First of all, note $\{X_i \to Y \} \subset \widetilde \Re$ because each reaction $X_i \to \emptyset \in \Re$ is converted to $X_i \to Y$ in any optimized slack network by its definition of an optimized slack network (See Section \ref{subsec:properties of slack}). Hence for each $k$, by firing the reactions $\{X_i \to Y \}$, $X_{N_k}$ can reach to $\vec 0=(0,0,\dots,0)$ from $\vec x_k$. 

The next aim is to show that for any fixed $k$ large enough, there exist a sequence of reactions $\{\tilde \nu_1\to \tilde \nu'_1,\dots \tilde \nu_n\to \tilde \nu'_n\} \subset \widetilde \Re$ such that  we have i) $\lambda^{N_k}_{\tilde \nu_i\to \tilde \nu'_i}(\vec x_k+\sum_{j=1}^{i-1}(\tilde \nu'_j-\tilde \nu_j))>0$ for each $i$, and ii) $\vec x_k+\sum_{j=1}^{n}(q(\tilde \nu'_j)-q(\tilde \nu_j)) \in \mathbb S_{N_k}$. This means that $X_{N_k}$ can reach to $\mathbb S_{N_k}$ from $\vec x_k$ along the reactions $\tilde \nu_i\to \tilde \nu_i$ in that order. For this aim, we make a use of the irreducibility of $\mathbb S$ since $\mathbb S$ so that there exists a sequence of reactions $\{\nu_1\to \nu'_1,\cdots,\nu_n\to \nu'_n\} \subset \Re$ such that   i) $\lambda_{\nu_i\to \nu'_i}(\vec 0+\sum_{j=1}^{i-1}(\nu'_j-\nu_j))>0$ for each $i$ and ii) $\vec 0+\sum_{j=1}^{n}(\nu'_j-\nu_j) \in \mathbb S_{N_1}$. By making $N_0\ge N_1$ large enough, we have if $N_k\ge N_0$, then
\begin{align*}
   \vec w\cdot \left (\vec 0 + \sum_{j=1}^{i}(\nu'_j-\nu_j) \right ) < N-c \quad \text{for each $i$.}
\end{align*}
 Hence by Lemma \ref{lem2}, for each reaction $\tilde \nu_i \to \tilde \nu'_i$ such that $q(\tilde \nu_i)=\nu_i$ and $q(\tilde \nu'_i)=\nu'_i$ we have 
\begin{align}\label{eq:condition 1 for closedness}
    \lambda^{N}_{\tilde \nu_i\to \tilde \nu'_i}\left (\vec 0+\sum_{j=1}^{i-1}(q(\tilde \nu'_j)-q(\tilde \nu_j)) \right )>0 \quad \text{for each $i$}. 
\end{align}
 Finally since $q(\tilde \nu_i)=\nu_i$ and $q(\tilde \nu'_i)=\nu'_i$ for each $i$, we have
\begin{align}\label{eq:condition 2 for closedness}
   \vec 0+\sum_{j=1}^{i-1}(q(\tilde \nu'_j)-q(\tilde \nu_j)) \in \mathbb S_{N_0}\subseteq \mathbb S_{N}.
   \end{align}
Hence \eqref{eq:condition 1 for closedness} and \eqref{eq:condition 2 for closedness} imply that if $N_k \ge N_0$, then $X_{N_k}$ can re-enter to $\mathbb S_{N_k}$ from $\vec 0$ with positive probability.

Consequently, we constructed a sequence of reactions along which $X_{N_k}$ can re-enter $\mathbb S_{N_k}$ from $\vec x_k$ for any $k$ such that $N_k\ge N_0$. Since each $\mathbb S_{N_k}$ is a communication class, $\vec x_k\in \mathbb S_{N_k}$ and ,in turn, this contradictions to the assumption that $\vec x_k \in \mathbb S_{N_k}$. Hence the claim holds so that $\mathbb S_N$ is closed for any $N$ large enough. 
\hfill $\square$

\section{Proofs for Lemma \ref{lem:cb} and Theorem \ref{theorem:relation between two}}

\textbf{Proof of Lemma \ref{lem:cb}: }
In the deterministic model $\tilde{\vec x}(t)=(\tilde x_1(t),\dots,\tilde x_d(t),y_1(t),\dots,y_m(t))^\top$ associated with $(\widetilde \S,\widetilde \C,\widetilde \Re,\Lambda^N)$, if we fix $y_i(t)\equiv 1$ for all $i=1,2,\dots,m$, then $\tilde x(t)$ follows the same ODE system as the deterministic model $x(t)$ associated with the original network $(\S,\C,\Re,\Lambda)$ does. Therefore $\tilde c=(c^*,1,1,\dots,1)^\top$ is a complex balanced steady state for $\tilde x(t)$. It has been shown that the existence of a single positive complex balanced steady state implies that all other positive steady states are complex balanced \cite{feinberg1972complex}, therefore completing the proof.  

\hfill $\square$

\textbf{Proof of Theorem \ref{theorem:relation between two}: }
Let $\vec w_i$ and $N_i$ be the conservative vector and the conservative quantity of the slack network $(\widetilde \S,\widetilde \C,\widetilde \Re,\Lambda^N)$, respectively.
Then $\pi$ is a stationary solution of the chemical master equation \eqref{eq:master} of $X$,
\begin{align*}
    &\sum_{\nu\to\nu' \in \Re}\lambda_{\nu\to \nu'}(\vec x-\nu'+\nu)\pi(\vec x-\nu'+\nu)\\
    &=\sum_{\nu\to\nu' \in \Re}\lambda_{\nu\to \nu'}(\vec x)\pi(\vec x).
\end{align*}
Especially, as shown in Theorem 6.4, Anderson et al\cite{anderson2010product}, $\pi$ satisfies the \emph{stochastic complex balance} for $X$:  for each complex $\nu^* \in \Re$ and a state $\vec x$,
\begin{align}
\begin{split}\label{eq:sto cx bal}
    &\sum_{\substack{\nu\to \nu' \in \Re \\ \nu'=\nu^*}}\lambda_{\nu\to \nu'}(\vec x-\nu'+\nu)\pi(\vec x-\nu'+\nu)\\
    &=\sum_{\substack{\nu\to \nu' \in \Re \\ \nu=\nu^*}}\lambda_{\nu\to \nu'}(\vec x)\pi(\vec x).
    \end{split}
\end{align}

Let $q$ be the one-to-one projection function $q$ that we defined for the proof of Theorem \ref{theorem:all out flows}. Note that each reaction $\tilde \nu\to \tilde \nu'\in \widetilde \Re$ is defined as it satisfies the conservative law 
\begin{align}\label{eq:use conv law}
\vec w_i \cdot (\nu'-\nu)+\tilde \nu'_{d+i}-\nu_{d+i}=0,    
\end{align}
 where $q(\tilde \nu)=\nu$ and $q(\tilde \nu')=\nu'$. Then $\pi$ also satisfies the stochastic complex balance for $X_N$ because \eqref{eq:sto cx bal} and \eqref{eq:use conv law} imply that for each complex $\tilde \nu^*\in \widetilde \Re$ and a state $\vec x$,
\begin{align*}
        &\sum_{\substack{\tilde \nu\to \tilde \nu'; \in \widetilde \Re  \\ \tilde \nu'=\tilde \nu^*}}\lambda^N_{\tilde \nu\to \tilde \nu'}(\vec x-q(\nu')+q(\nu))\pi(\vec x-q(\nu')+q(\nu))\\
        &= \sum_{\substack{ \nu\to  \nu' \in \Re \\ \tilde \nu'=\tilde \nu^*}}\lambda_{\tilde \nu\to \tilde \nu'}(\vec x-\nu'+\nu)\pi(\vec x-\nu'+\nu)\prod_{i=1}^r\mathbbm{1}_{\{N_i-\vec w_i \cdot(\vec x-\nu'+\nu) \ge \tilde \nu_{d+i}\}}\\
        &= \sum_{\substack{ \nu\to  \nu' \in \Re \\ \tilde \nu'=\tilde \nu^*}}\lambda_{\tilde \nu\to \tilde \nu'}(\vec x-\nu'+\nu)\pi(\vec x-\nu'+\nu)\prod_{i=1}^r\mathbbm{1}_{\{N_i-\vec w_i \cdot \vec x \ge \tilde \nu^*_{d+i}\}}\\
        &=\sum_{\substack{\nu\to \nu' \in \Re \\ \nu=\nu^*}}\lambda_{\nu\to \nu'}(\vec x)\pi(\vec x)\mathbbm{1}_{\{N_i-\vec w_i \cdot \vec x \ge \tilde \nu^*_{d+i}\}}\\
         &=\sum_{\substack{\tilde \nu\to \tilde \nu' \in \widetilde \Re  \\ \tilde \nu=\tilde \nu^*}}\lambda^N_{\tilde \nu\to \tilde \nu'}(\vec x)\pi(\vec x).
\end{align*}
Then by summing up for each $\tilde \nu^*\in \widetilde \Re$, 
\begin{align*}
&\sum_{\tilde \nu\to \tilde \nu'; \in \widetilde \Re}\lambda^N_{\tilde \nu\to \tilde \nu'}(\vec x-q(\nu')+q(\nu))\pi(\vec x-q(\nu')+q(\nu))\\
&=\sum_{\tilde \nu\to \tilde \nu' \in \widetilde \Re }\lambda^N_{\tilde \nu\to \tilde \nu'}(\vec x)\pi(\vec x),
\end{align*}
and, in turn, $\pi_N$ is a stationary solution of the chemical master equation of $X_N$. 
Since the state space of $X$ and $X_N$ differ, $M_N$ is a constant such that the sum of $M_N\pi(\vec x)$ over the state space of $X_N$ is one.
Then $\pi_N=M_N\pi$. $\hfill \square$

\section{Lypunov Functions for Example Systems}\label{appenx:example1}

\subsection{Lotka-Volterra with Migration}
\label{sec:LyapunovLotkaVoltera}

We construct a Lyapunov function satisfying the conditions of Theorem \ref{thm:Lyapunov} for the Lotka-Volterra model with migration (Figure \ref{fig:GLV}A). First, for both the original model and the slack network, we assume $A(0)=a_0$, $B(0)=b_0$ that are not depend on $N$.

Let $V(\vec x)=e^{w\cdot \vec x}$ with $w=(1,1)^\top$. The condition in Theorem \ref{thm:Lyapunov} obviously holds. Note that the slack network of Figure \ref{fig:GLV}A admits a conservation law $A+B+Y=N$ and the state space of the system $\mathbb S_N$ is $\{(a,b) \ | \ a+b\le N\}$. Then condition 2 in Theorem \ref{thm:Lyapunov} also follows by the definition of $V$.

Now, to show condition 3 in Theorem \ref{thm:Lyapunov}, we first note that for $\vec x=(a,b)$,
\begin{align}
\begin{split}\label{eq:LV detail1}
    &\sum_{\nu \to \nu'\in \Re}\lambda_{\nu\to \nu'}(\vec x)(V(\vec x +\nu'-\nu)-V(\vec x))\\
    &=e^{w\cdot \vec x}\sum_{\nu \to \nu'\in \Re}\lambda_{\nu\to \nu'}(\vec x)(e^{w\cdot (\nu'-\nu)}-1)\\
    &=V(\vec x)\Big ( \kappa_1 b (e^{-1}-1)+\kappa_2(e-1)+\kappa_3(e^{-1}-1)\\ 
    & \ \ +\kappa_4 a (e^{-1}-1)+\kappa_5 a(e-1) \Big ).
    \end{split}
\end{align}
Since we assumed $\frac{\kappa_4}{\kappa_5}=3$ (see caption of Figure \ref{fig:GLV}), each coefficient of $a$ and $b$ is negative. Thus, there exists $M>0$  such that for each $\vec x \in \{ (a,b) \ | \ a> M \text{ or } b \ge M\}$, the left-hand side in \eqref{eq:LV detail1} satisfies
\begin{align*}
  &\sum_{\nu \to \nu'\in \Re}\lambda_{\nu\to \nu'}(\vec x)(V(\vec x +\nu'-\nu)-V(\vec x))\\
  &\le -CV(\vec x)
\end{align*}
for some $C>0$. Letting 
\[D=(C+1)\displaystyle\max _{a\le M \text{ and } b \le M}\sum_{\nu \to \nu'\in \Re}\lambda_{\nu\to \nu'}(\vec x)(V(\vec x +\nu'-\nu)-V(\vec x)),\] condition 3 in Theorem \ref{thm:Lyapunov} holds with $C$ and $D$. 

\subsection{Protein Synthesis with Slow Toggle Switch}
\label{sec:LyapunovToggle}

We also make a use of the Lyapunov approach shown in Theorem \ref{thm:Lyapunov} to prove that the first passage time of the slack system in Figure \ref{fig:Toggle}B converges to the original first passage time, as the truncation size $N$ goes infinity.
Let $X_N=(X,Z,D^X_0,D^X_1,D^Z_0,D^Z_1)$ be the stochastic system associated with the slack system. Recall that the slack network admits a conservation relation $w\cdot X\le N$ with $w=(1,1,0,0,0,0)$. Hence we define a Lyapunov function $V(\vec x)=e^{x+y}$ where $\vec x=(x,z,dx_0,dx_1,dz_0,dz_1)$. By the definition of $V$, it is obvious that conditions 1 and 2 in Theorem \ref{thm:Lyapunov} hold. So we show that condition 3 in Theorem holds.  

For a reaction $\nu\to\nu' \in \Re_I:=\{Z+D^X_0\to D^X_1, X\to 0, X+D^Z_0\to D^Z_1, Z\to 0\}$, it is clear that the term $V(\vec x+\nu'-\nu)-V(\vec x)$ is negative. For each reaction $\nu\to \nu'$ in $\Re^c_I$, it is also clear that the term $V(\vec x+\nu'-\nu)-V(\vec x)$ is positive. However, the reaction intensity for a reaction in $\Re_I$ is linear in either $x$ and $z$, while the reaction intensity for a reaction in $\Re^c_I$ is constant. Therefore there exists a constant $M>0$ such that if $x>M$ or $z>M$, then
\begin{align*}
  &\sum_{\nu \to \nu'\in \Re}\lambda_{\nu\to \nu'}(\vec x)(V(\vec x +\nu'-\nu)-V(\vec x))\\
  &\le -CV(\vec x)
\end{align*}
for some $C>0$. Hence, letting \[D=(C+1)\displaystyle\max _{x\le M \text{ and } z \le M}\sum_{\nu \to \nu'\in \Re}\lambda_{\nu\to \nu'}(\vec x)(V(\vec x +\nu'-\nu)-V(\vec x)),\] condition 3 in Theorem \ref{thm:Lyapunov} holds with $C$ and $D$. 

%\url{https://www.mathworks.com/matlabcentral/fileexchange/33966-sparseinv-sparse-inverse-subset}

\bibliographystyle{plain}
\bibliography{slack}% Produces the bibliography via BibTeX.

\end{document}